%%%%%%%%%%%%%%%%%%%%%%%%%%%%%
\documentclass[12pt]{article}
\usepackage{color}
\usepackage{epsfig}
\usepackage[subrefformat=parens]{subfig}
\usepackage{graphicx}
\usepackage{amsmath}
\usepackage{amsthm}
\usepackage{amsfonts}
\usepackage{stmaryrd}
\usepackage[mathscr]{eucal}
\usepackage{multirow}
\usepackage[ruled,vlined]{algorithm2e}
\usepackage[justification=centering]{caption}
\usepackage[top=0.5in, bottom=1in, left=1in, right=1.25in]{geometry}
\usepackage{url}

\newtheorem{thm}{Theorem}

\newcommand{\beq}{\begin{equation}}
\newcommand{\eeq}{\end{equation}}
\newcommand{\bfx}{\mathbf{x}}
\newcommand{\bfp}{\mathbf{p}}
\newcommand{\bfg}{\mathbf{g}}
\newcommand{\bfr}{\mathbf{r}}
\newcommand{\bfb}{\mathbf{b}}

\newcommand{\bfA}{\mathbf{A}}
\newcommand{\bfC}{\mathbf{C}}
\newcommand{\bfP}{\mathbf{P}}
\newcommand{\bfM}{\mathbf{M}}

%%%%%%%%%%%%%%%%%%%%%%%%%%%%%%%%%%%%%%
\title{A Nonlinearly Preconditioned Conjugate Gradient Algorithm for Rank-$R$ Canonical Tensor Approximation}

\author{Hans De Sterck\footnotemark[2]\ \footnotemark[3]
\and Manda Winlaw\footnotemark[2]\ \footnotemark[3]}

\date{}

%%%%%%%%%%%%%%%%%%%%%%%%%%%%%%%%%%%%%%%%
\begin{document}
%%%%%%%%%%%%%%%%%%%%%%%%%%%%%%%%%%%%%%%%

\maketitle

\renewcommand{\thefootnote}{\fnsymbol{footnote}}

\footnotetext[2]{Department of Applied Mathematics, University of Waterloo, Waterloo, Ontario, N2L 3G1,
Canada (hdesterck@uwaterloo.ca, mwinlaw@uwaterloo.ca)}
\footnotetext[3]{This work was supported by NSERC of Canada and was supported in part by the Scalable Graph Factorization LDRD Project, 13-ERD-072, under the auspices of the U.S. Department of Energy by Lawrence Livermore National Laboratory under Contract DE-AC52-07NA27344.}

\renewcommand{\thefootnote}{\arabic{footnote}}

\begin{abstract}
Alternating least squares (ALS) is often considered the workhorse algorithm for computing the rank-$R$ canonical tensor approximation, but for certain problems its convergence can be very slow. The nonlinear conjugate gradient (NCG) method was recently proposed as an alternative to ALS, but the results indicated that NCG is usually not faster than ALS. To improve the convergence speed of NCG, we consider a nonlinearly preconditioned nonlinear conjugate gradient (PNCG) algorithm for computing the rank-$R$ canonical tensor decomposition. Our approach uses ALS as a nonlinear preconditioner in the NCG algorithm. Alternatively, NCG can be viewed as an acceleration process for ALS. We demonstrate numerically that the convergence acceleration mechanism in PNCG often leads to important pay-offs for difficult tensor decomposition problems, with convergence that is significantly faster and more robust than for the stand-alone NCG or ALS algorithms.  We consider several approaches for incorporating the nonlinear preconditioner into the NCG algorithm that have been described in the literature previously and have met with success in certain application areas. However, it appears that the nonlinearly preconditioned NCG approach has received relatively little attention in the broader community and remains underexplored both theoretically and experimentally. Thus, this paper serves several additional functions, by providing in one place a concise overview of several PNCG variants and their properties that have only been described in a few places scattered throughout the literature, by systematically comparing the performance of these PNCG variants for the tensor decomposition problem, and by drawing further attention to the usefulness of nonlinearly preconditioned NCG as a general tool. In addition, we briefly discuss the convergence of the PNCG algorithm. In particular, we obtain a new convergence result for one of the PNCG variants under suitable conditions, building on known convergence results for non-preconditioned NCG.

\bigskip
\noindent {\bf Keywords. }canonical tensor decomposition, alternating least squares, nonlinear conjugate gradient method, nonlinear preconditioning, nonlinear optimization
\end{abstract}

%\begin{AMS}
%15A69, 65K05
%\end{AMS}

%%%%%%%%%%%%%%%%%%%%%%%%%%%%%%%%%%%%%%%%
\section{Introduction}
%%%%%%%%%%%%%%%%%%%%%%%%%%%%%%%%%%%%%%%%

In this paper, we consider a nonlinearly preconditioned nonlinear conjugate gradient (PNCG) algorithm for computing a canonical rank-$R$ tensor approximation using the Frobenius norm as a distance metric.  The current workhorse algorithm for computing the canonical tensor decomposition is the alternating least squares (ALS) algorithm \cite{CarrollChang1970,Harshman1970,KoldaBader2009}. The ALS method is simple to understand and implement, but for certain problems its convergence can be very slow \cite{TomasiBro2006,KoldaBader2009}. In \cite{ADK2011}, the nonlinear conjugate gradient (NCG) method is considered as an alternative to ALS for solving canonical tensor decomposition problems. However, \cite{ADK2011} found that NCG is usually not faster than ALS. In this paper, we show how incorporating ALS as a nonlinear preconditioner into the NCG algorithm (or, equivalently, accelerating ALS by the NCG algorithm) may lead to significant convergence acceleration for difficult canonical tensor decomposition problems.

Our approach is among extensive, recent, research activity on nonlinear preconditioning for nonlinear iterative solvers \cite{FangSaad2009,Elad2010,WalkerNi2011,DeSterck2012,PETSC2013}, including nonlinear GMRES and NCG. This work builds on original contributions dating back as far as the 1960s \cite{Anderson1965,CGO1977,Pulay1980,OW2000}, but much of this early work is not well-known in the broader community and large parts of the landscape remain unexplored experimentally and theoretically \cite{PETSC2013}; the recent paper \cite{PETSC2013} gives a comprehensive overview of the state of the art in nonlinear preconditioning and provides interesting new directions. 

In this paper, we consider nonlinear preconditioning of NCG for the canonical tensor decomposition problem. We consider several approaches for incorporating the nonlinear preconditioner into the NCG algorithm that are described in the literature (see \cite{BartelsDaniel1974,CGO1977,Mittlemann1980,Elad2010,PETSC2013}). Early references to nonlinearly preconditioned NCG include \cite{BartelsDaniel1974} and \cite{CGO1977}.  Both propose the NCG algorithm as a solution method for solving nonlinear elliptic partial differential equations (PDEs) and while both present NCG algorithms that include a possible nonlinear preconditioner, \cite{CGO1977} actually uses a block nonlinear SSOR method as the nonlinear preconditioner in their numerical experiments. Hager and Zhang's survey paper \cite{HagerZhang2006} describes a linearly preconditioned NCG algorithm, but does not discuss general nonlinear preconditioning for NCG. More recent work on nonlinearly preconditioned NCG includes \cite{Elad2010}, which uses parallel coordinate descent as a nonlinear preconditioner for one variant of NCG applied to $L_1-L_2$ optimization in signal and image processing. The recent overview paper \cite{PETSC2013} on nonlinear preconditioning also briefly mentions nonlinearly preconditioned NCG, but discusses a different variant than \cite{BartelsDaniel1974}, \cite{CGO1977}, \cite{Mittlemann1980} and \cite{Elad2010}. In Section \ref{sec:PNCG}, the differences between the PNCG variants of \cite{BartelsDaniel1974,CGO1977,Mittlemann1980,Elad2010,PETSC2013} will be explained. In Section \ref{sec:PNCG} we will also prove a new convergence result for one of the PNCG variants, building on known convergence results for non-preconditioned NCG. In Section 4, extensive numerical tests on a set of standard test tensors systematically compare the performance of the PNCG variants using ALS as the nonlinear preconditioner, and demonstrate the effectiveness of the overall approach.

As mentioned above, we apply the PNCG algorithm to the tensor decomposition problem which can be described as follows.  Let $\mathscr{X} \in \mathbb{R}^{I_1 \times I_2 \times \ldots \times I_N}$ be a $N$-way or $N$th-order tensor of size $I_1 \times I_2 \times \ldots \times I_N$.  Let $\mathscr{A}_R \in \mathbb{R}^{I_1 \times I_2 \times \ldots \times I_N}$ be a canonical rank-$R$ tensor given by
\beq
\mathscr{A}_R = \sum_{r=1}^R \mathbf{a}_r^{(1)} \circ \ldots \circ \mathbf{a}_r^{(N)} = \llbracket \mathbf{A}^{(1)},\ldots,\mathbf{A}^{(N)}\rrbracket.
\eeq
The canonical tensor $\mathscr{A}_R$ is the sum of $R$ rank-one tensors, with the $r$th rank-one tensor composed of the outer product of $N$ column vectors $\mathbf{a}_r^{(n)} \in \mathbb{R}^{I_n}, \; n = 1, \ldots, N$.  We are interested in finding $\mathscr{A}_R$ as an approximation to $\mathscr{X}$ by minimizing the following function: 
\beq
\label{Eq:MinFcn1}
f(\mathscr{A}_R) = \frac{1}{2}\|\mathscr{X}-\mathscr{A}_R\|^2_F,
\eeq
where $\|\cdot\|_F$ denotes the Frobenius norm of the $N$-dimensional array.

The decomposition of $\mathscr{X}$ into $\mathscr{A}_R$ is known as the canonical tensor decomposition. Popularized by Carroll and Chang \cite{CarrollChang1970} as CANDECOMP and by Harshman \cite{Harshman1970} as PARAFAC in the 1970s, the decomposition is commonly referred to as the CP decomposition where the `C' refers to CANDECOMP and the `P' refers to PARAFAC. The canonical tensor decomposition is commonly used as a data analysis technique in a wide variety of fields including chemometrics, signal processing, neuroscience and web analysis \cite{KoldaBader2009,Acar2008}.  

The ALS algorithm for CP decomposition was first proposed in papers by Carroll and Chang \cite{CarrollChang1970} and Harshman \cite{Harshman1970}. For simplicity we present the algorithm for a three-way tensor $\mathscr{X} \in \mathbb{R}^{I \times J \times K}$.  In this case, the objective function (\ref{Eq:MinFcn1}) simplifies to 
\beq
f(\mathscr{\widehat{X}}) = \frac{1}{2}\|\mathscr{X}-\mathscr{\widehat{X}}\|^2_F \; \mbox{with} \; \mathscr{\widehat{X}} = \sum_{r=1}^k \mathbf{a}_r \circ \bfb_r \circ \mathbf{c}_r  = \llbracket \mathbf{A}, \mathbf{B}, \mathbf{C}\rrbracket.
\eeq
The ALS approach fixes $\mathbf{B}$ and $\mathbf{C}$ to solve for $\mathbf{A}$, then fixes $\mathbf{A}$ and $\mathbf{C}$ to solve for $\mathbf{B}$, then fixes $\mathbf{A}$ and $\mathbf{B}$ to solve for $\mathbf{C}$.  This process continues until some convergence criterion is satisfied.  Once all but one matrix is fixed, the problem reduces to a linear least-squares problem.  Since we are solving a nonlinear equation for a block of variables while holding all the other variables fixed the ALS algorithm is in fact a block nonlinear Gauss-Seidel algorithm.  The algorithm can easily be extended to N-way tensors by fixing all but one of the matrices. The ALS method is simple to understand and implement, but can take many iterations to converge.  It is not guaranteed to converge to a global minimum or even a stationary point of (\ref{Eq:MinFcn1}).  We  can only guarantee that the objective function in (\ref{Eq:MinFcn1}) is nonincreasing at every step of the ALS algorithm. As well, if the ALS algorithm does converge to a stationary point, the stationary point can be heavily dependent on the starting guess.  See \cite{KoldaBader2009, Uschmajew2012} for a discussion on the convergence of the ALS algorithm.

A number of algorithms have been proposed as alternatives to the ALS algorithm.  See \cite{ADK2011, KoldaBader2009, TomasiBro2006, GKT2013} and the references therein for examples. Acar, Dunlavy and Kolda \cite{ADK2011} recently applied a standard NCG algorithm to solve the problem.  They find that NCG is usually not faster than ALS, even though it has its advantages in terms of overfactoring and its ability to solve coupled factorizations \cite{ADK2011,Acar2011}.  In an earlier paper, Paatero \cite{Paatero1999} uses the linear conjugate gradient algorithm to solve the normal equations associated with the CP decomposition and suggests the possible use of a linear preconditioner to increase the convergence speed, however, no extensive numerical testing of the algorithm is performed.  Inspired by the nonlinearly preconditioned nonlinear GMRES method of \cite{DeSterck2012}, we propose in this paper to accelerate the NCG approach of \cite{ADK2011} by considering the use of ALS as a nonlinear preconditioner for NCG.

In terms of notation, throughout the paper we use CG to refer to the linear conjugate gradient algorithm applied to a symmetric positive definite (SPD) linear system without preconditioning, and PCG refers to CG for SPD linear systems with (linear) preconditioning. Similarly, NCG refers to the nonlinear conjugate gradient algorithm for optimization problems without preconditioning, and PNCG refers to the class of (nonlinearly) preconditioned nonlinear conjugate gradient methods for optimization.

The remainder of the paper is structured as follows.  In Section 2, we introduce the standard nonlinear conjugate gradient algorithm for unconstrained continuous optimization. Section 3 gives a concise description of several variants of the PNCG algorithm that we collect from the literature and describe systematically, and it discusses their relation to the PCG algorithm in the linear case, followed by a brief convergence discussion highlighting our new convergence result. In Section 4 we follow the experimental procedure of Tomasi and Bro \cite{TomasiBro2006} to generate test tensors that we use to systematically compare the several PNCG variants we have described with the standard ALS and NCG algorithms. Section 5 concludes.

%%%%%%%%%%%%%%%%%%%%%%%%%%%%%%%%%%%%%%%%
\section{Nonlinear Conjugate Gradient Algorithm}
%%%%%%%%%%%%%%%%%%%%%%%%%%%%%%%%%%%%%%%%

The NCG algorithm for continuous optimization is an extension of the CG algorithm for linear systems.  The CG algorithm minimizes the convex quadratic function
\beq
\label{Eq:ConvexFunction}
\phi(\bfx) = \frac{1}{2}\bfx^T\bfA\bfx - \bfb^T\bfx,
\eeq
where $\bfA \in \mathbb{R}^{n \times n}$ is an SPD matrix.  Equivalently, the CG algorithm can be viewed as an iterative method for solving the linear system of equations $\bfA\bfx = \bfb$.  The NCG algorithm is adapted from the CG algorithm and can be applied to any unconstrained optimization problem of the form 
\beq
\min_{\bfx \in \mathbb{R}^n} f(\bfx)
\eeq
where $f:\mathbb{R}^n \rightarrow \mathbb{R}$ is a continuously differentiable function bounded from below.  The general form of the NCG algorithm is summarized in Algorithm \ref{Alg:NCG}.  

%-------------------------------------------------------------------
\begin{algorithm}[ht]
 \SetAlgoLined
 \KwIn{$\bfx_0$}
 Evaluate $\bfg_0 = \nabla{f}(\bfx_0)$\;
 Set $\bfp_0 \leftarrow -\bfg_0, k \leftarrow 0$\;
 \While{$\bfg_k \neq 0$}{
  Compute $\alpha_k$\;
  $\bfx_{k+1} \leftarrow \bfx_k + \alpha_k\bfp_k$\;
  Evaluate $\bfg_{k+1} = \bfg(x_{k+1})=\nabla f(x_{k+1})$\;
  Compute $\beta_{k+1}$\;
  $\bfp_{k+1} \leftarrow -\bfg_{k+1}+\beta_{k+1}\bfp_k$\;
  $k \leftarrow k +1$\;
   }
 \caption{Nonlinear Conjugate Gradient Algorithm (NCG)}
 \label{Alg:NCG}
\end{algorithm}
%-------------------------------------------------------------------

The NCG algorithm  is a line search algorithm that generates a sequence of iterates $\bfx_i$, $i \geq 1$ from the initial guess $\bfx_0$ using the recurrence relation
\beq
\label{Eq:NCG:StepUpdate}
\bfx_{k+1} = \bfx_k + \alpha_k \bfp_k.
\eeq
The parameter $\alpha_k > 0$ is the step length and $\bfp_k$ is the search direction generated by the following rule:
\beq
\label{Eq:NCG:SearchDirectionUpdate}
\bfp_{k+1} = -\bfg_{k+1} + \beta_{k+1} \bfp_k, \;\; \bfp_0 = -\bfg_0,
\eeq
where $\beta_{k+1}$ is the update parameter and $\bfg_k = \nabla f(\bfx_k)$ is the gradient of $f$ evaluated at $\bfx_k$.  In the CG algorithm, $\alpha_k$ is defined as
\beq
\alpha_k^{CG} = \frac{\bfr_k^T\bfr_k}{\bfp_k^T\bfA\bfp_k},
\eeq
and $\beta_{k+1}$ is defined as
\beq
\label{Eq:CG:Beta}
\beta_{k+1}^{CG} = \frac{\bfr_{k+1}^T\bfr_{k+1}}{\bfr_{k}^T\bf\bfr_{k}},
\eeq
where $\bfr_k = \nabla \phi(\bfx_k) = \bfA\bfx_k - \bfb$ is the residual. In the nonlinear case $\alpha_k$ is determined by a line search algorithm and $\beta_{k+1}$ can assume various different forms.  We consider three different forms in this paper, given by
\beq
\label{Eq:NCG:BetaFR}
\beta_{k+1}^{FR} = \frac{\bfg_{k+1}^T\bfg_{k+1}}{\bfg_{k}^T\bfg_{k}},
\eeq
\beq
\label{Eq:NCG:BetaPR}
\beta_{k+1}^{PR} = \frac{\bfg_{k+1}^T(\bfg_{k+1}-\bfg_k)}{\bfg_{k}^T\bfg_{k}},
\eeq
\beq
\label{Eq:NCG:BetaHS}
\beta_{k+1}^{HS} = \frac{\bfg_{k+1}^T(\bfg_{k+1}-\bfg_k)}{(\bfg_{k+1}-\bfg_k)^T\bfp_k}.
\eeq
Fletcher and Reeves \cite{FR1964} first showed how to extend the conjugate gradient algorithm to the nonlinear case. By replacing the residual, $\mathbf{r}_k$, with the gradient of the nonlinear objective $f$, they obtained a formula for $\beta_{k+1}$ of the form $\beta_{k+1}^{FR}$.  The variant $\beta_{k+1}^{PR}$ was developed by Polak and Ribi\`{e}re \cite{Polak1969} and the Hestenes-Stiefel \cite{Hestenes1952} formula is given by Equation (\ref{Eq:NCG:BetaHS}).  For all three versions, it can easily be shown that, if a convex quadratic function is optimized using the NCG algorithm and the line search is exact then $\beta_{k+1}^{FR} = \beta_{k+1}^{PR} = \beta_{k+1}^{HS} = \beta_{k+1}^{CG}$ where $\beta_{k+1}^{CG}$ is given by Equation (\ref{Eq:CG:Beta}), see \cite{NocedalWright2006}.  

%%%%%%%%%%%%%%%%%%%%%%%%%%%%%%%%%%%%%%%%
\section{Preconditioned Nonlinear Conjugate Gradient Algorithm}
\label{sec:PNCG}
%%%%%%%%%%%%%%%%%%%%%%%%%%%%%%%%%%%%%%%%

In this section we give a concise description of several variants of PNCG that have been proposed in a few places in the literature but have not been discussed and compared systematically in one place, briefly discuss some of their relevant properties, and prove a new convergence property for one of the variants.
Before we introduce PNCG we describe the PCG algorithm for linear systems. We do this because it will be useful for interpreting some of the variants for $\beta_{k+1}$ in the PNCG algorithm. In particular, one variant of the $\beta_{k+1}$ formulas has the property that PNCG applied to the convex quadratic function (\ref{Eq:ConvexFunction}) is equivalent to PCG under certain conditions on the line search and the preconditioner.

%---------------------------------------------------------------------
\subsection{Linearly Preconditioned Linear Conjugate Gradient Algorithm}
%---------------------------------------------------------------------

Preconditioning the conjugate gradient algorithm is commonly used in numerical linear algebra to speed up convergence \cite{Saad2003}. The rate of convergence of the linear conjugate gradient algorithm can be bounded by examining the eigenvalues of the matrix $\bfA$ in (\ref{Eq:ConvexFunction}). For example, if the eigenvalues occur in $r$ distinct clusters the CG iterates will approximately solve the problem in $r$ steps \cite{NocedalWright2006}. Thus, one way to improve the convergence of the CG algorithm is to transform the linear system $\bfA\bfx = \bfb$ to improve the eigenvalue distribution of $\bfA$.  Consider a change of variables from $\bfx$ to $\widehat{\bfx}$ via a nonsingular matrix $\bfC$ such that $\widehat{\bfx} = \bfC\bfx$.  This process is known as preconditioning.  The new objective function is
\beq
\widehat{\phi}(\widehat{\bfx}) = \frac{1}{2}\widehat{\bfx}^T(\bfC^{-T} \bfA \bfC^{-1})\widehat{\bfx} - (\bfC^{-T}\bfb)^T\widehat{\bfx},
\eeq
and the new linear system is
\beq
(\bfC^{-T}\bfA\bfC^{-1})\widehat{\bfx} = \bfC^{-T}\bfb.
\label{eq:Plin}
\eeq
Thus, the convergence rate will depend on the eigenvalues of the matrix $\bfC^{-T}\bfA\bfC^{-1}$.  If we choose $\bfC$ such that the condition number of $\bfC^{-T}\bfA\bfC^{-1}$ is smaller than the condition number of $\bfA$ or such that eigenvalues of $\bfC^{-T}\bfA\bfC^{-1}$ are clustered, then hopefully the preconditioned CG algorithm will converge faster than the regular CG algorithm.   The preconditioned conjugate gradient algorithm is given in Algorithm \ref{Alg:PCG}, expressed in terms of the original variable $\bfx$ using the SPD preconditioning matrix $\textbf{P} = \bfC^{-1}\bfC^{-T}$. Note that the preconditioned linear system (\ref{eq:Plin}) can equivalently be expressed as $\textbf{P} \bfA \bfx = \textbf{P} \bfb$, where $\textbf{P} \bfA$ has the same eigenvalues as $\bfC^{-T}\bfA\bfC^{-1}$. 
In Algorithm 2, we do not actually form the matrix $\bfP$ explicitly.  Instead, we solve the linear system $\bfM\mathbf{y}_k = \bfr_k$ for $\textbf{y}_k$ with $\textbf{M} = \bfP^{-1} = \bfC\bfC^T$ and use $\textbf{y}_k$ in place of $\bfP\bfr_k$.  See Algorithm 5.3 in \cite{NocedalWright2006} for the PCG algorithm written in this way. Algorithm \ref{Alg:PCG} is written in terms of $\bfP$ to compare the PCG algorithm with the preconditioned NCG algorithm in what follows.

%-------------------------------------------------------------------
\begin{algorithm}[ht]
 \SetAlgoLined
 \KwIn{$\bfx_0$, Preconditioner $\bfP = \bfC^{-1}\bfC^{-T}$}
 Evaluate $\bfr_0 = \bfA\bfx_0-\bfb$\;
 Evaluate $\bfp_0 = -\bfP \bfr_0, k \leftarrow 0$\;
 \While{$\bfr_k \neq 0$}{
  $\alpha_k \leftarrow \frac{\bfr_k^T\bfP\bfr_k}{\bfp_k\bfA\bfp_k}$\;
  $\bfx_{k+1} \leftarrow \bfx_k + \alpha_k\bfp_k$\;
  $\bfr_{k+1} \leftarrow \bfr_k + \alpha_k\bfA\bfp_k$\;
  $\beta_{k+1} \leftarrow \frac{\bfr_{k+1}^T\bfP\bfr_{k+1}}{\bfr_k\bfP\bfr_k}$\;
  $\bfp_{k+1} \leftarrow -\bfP\bfr_{k+1}+\beta_{k+1}\bfp_k$\;
  $k \leftarrow k +1$\;
   }
 \caption{Linearly Preconditioned Linear Conjugate Gradient Algorithm (PCG)}
 \label{Alg:PCG}
\end{algorithm}
%-------------------------------------------------------------------

%---------------------------------------------------------------------
\subsection{Linearly Preconditioned Nonlinear Conjugate Gradient Algorithm}
%---------------------------------------------------------------------

We can also apply a linear change of variables, $\widehat{\bfx} = \bfC\bfx$, to the NCG algorithm as is explained in review paper \cite{HagerZhang2006}. The linearly preconditioned NCG algorithm expressed in terms of the original variable $\bfx$ can be described by the following equations:
\begin{align}
\label{Eq:LPNCG:StepUpdate}
\bfx_{k+1} &= \bfx_k + \alpha_k\bfp_k, \\
\label{Eq:LPNCG:SearchDirectionUpdate}
\bfp_{k+1} & = -\bfP\bfg_{k+1} + \breve{\beta}_{k+1}\bfp_k, \;\; \bfp_0 = -\bfP\bfg_0, 
\end{align}
where $\bfP = \bfC^{-1}\bfC^{-T}$.  The formulas for $\breve{\beta}_{k+1}$ remain the same as before (Equations (\ref{Eq:NCG:BetaFR})-(\ref{Eq:NCG:BetaHS})), except that $\bfg_k$ and $\bfp_k$ are replaced by $\bfC^{-T}\bfg_k$ and $\bfC\bfp_k$, respectively.  Thus we obtain linearly preconditioned versions of the $\beta_{k+1}$ parameters of Equations (\ref{Eq:NCG:BetaFR})-(\ref{Eq:NCG:BetaHS}):
\beq
\label{Eq:LPNCG:BetaFR}
\breve{\beta}_{k+1}^{FR} = \frac{\bfg_{k+1}^T\bfP\bfg_{k+1}}{\bfg_{k}^T\bfP\bfg_{k}},
\eeq
\beq
\label{Eq:LPNCG:BetaPR}
\breve{\beta}_{k+1}^{PR} = \frac{\bfg_{k+1}^T\bfP(\bfg_{k+1}-\bfg_k)}{\bfg_{k}^T\bfP\bfg_{k}},
\eeq
\beq
\label{Eq:LPNCG:BetaHS}
\breve{\beta}_{k+1}^{HS} = \frac{\bfg_{k+1}^T\bfP(\bfg_{k+1}-\bfg_k)}{(\bfg_{k+1}-\bfg_k)^T\bfp_{k}}.
\eeq

If we use the linearly preconditioned NCG algorithm with these $\breve{\beta}_{k+1}$ formulas to minimize the convex quadratic function, $\phi(\bfx)$, defined in Equation (\ref{Eq:ConvexFunction}), using an exact line search, where $\bfg_k = \bfr_k$, then the algorithm is the same as the PCG algorithm described in Algorithm \ref{Alg:PCG}.  This can easily be shown in the same way as Equations (\ref{Eq:NCG:BetaFR})-(\ref{Eq:NCG:BetaHS}) are shown to be equivalent to Equation (\ref{Eq:CG:Beta}) in the linear case without preconditioning \cite{HagerZhang2006,NocedalWright2006}. Hager and Zhang's survey paper \cite{HagerZhang2006} describes this linearly preconditioned NCG algorithm, and also notes that $\bfP$ can be chosen differently in every step (see \cite{NazarethNocedal1982}). While a varying $\bfP$ does introduce a certain type of nonlinearity into the preconditioning process, the preconditioning in every step remains a linear transformation, and is thus different from the more general nonlinear preconditioning to be described in the next section, which employs a general nonlinear transformation in every step.

%---------------------------------------------------------------------
\subsection{Nonlinearly Preconditioned Nonlinear Conjugate Gradient Algorithm}
%---------------------------------------------------------------------

Suppose instead, we wish to introduce a nonlinear transformation of $\bfx$.  In particular, suppose we consider a nonlinear iterative optimization method such as Gauss-Seidel.  Let $\overline{\bfx}_k$ be the preliminary iterate generated by one step of a nonlinear iterative method, i.e., we write
\beq
\overline{\bfx}_k = P(\bfx_k),
\eeq
which we will use as a nonlinear preconditioner.  Now define the direction generated by the nonlinear preconditioner as 
\beq
\label{Eq:ModifiedGradient}
\overline{\bfg}_k = \bfx_k - \overline{\bfx}_k = \bfx_k - P(\bfx_k).
\eeq
In nonlinearly preconditioned NCG, one considers the nonlinearly preconditioned direction, $\overline{\bfg}_k$, instead of the gradient, $\bfg_k$, in formulating the NCG method \cite{BartelsDaniel1974,CGO1977,Mittlemann1980,Elad2010,PETSC2013}. This idea can be motivated by the linear preconditioning of CG, where $\bfg_k = \bfr_k$ is replaced by the preconditioned gradient $\bfP\bfg_k = \bfP\bfr_k$ in certain parts of Algorithm \ref{Alg:PCG}.
This corresponds to replacing the Krylov space for CG, which is formed by the gradients $\bfg_k = \bfr_k$, with the left-preconditioned Krylov space for PCG, which is formed by the preconditioned gradients $\bfP\bfg_k = \bfP\bfr_k$ \cite{Saad2003}. In a similar way, we replace the nonlinear gradients $\bfg_k$ with the nonlinearly preconditioned directions $\overline{\bfg}_k$. Note that this approach is called nonlinear left-preconditioning in \cite{PETSC2013}, which also considers nonlinear right-preconditioning.

Thus, our nonlinearly preconditioned NCG algorithm is given by the following equations:
\begin{align}
\bfx_{k+1} & = \bfx_k + \alpha_k \bfp_k, \label{Eq:NPNCG:StepUpdate} \\
\bfp_{k+1} & = -\overline{\bfg}_{k+1} + \overline{\beta}_{k+1}\bfp_k, \;\; \bfp_0 = -\overline{\bfg}_0, \label{Eq:NPNCG:SearchDirectionUpdate}
\end{align}
instead of Equations (\ref{Eq:NCG:StepUpdate}) and (\ref{Eq:NCG:SearchDirectionUpdate}) or Equations (\ref{Eq:LPNCG:StepUpdate}) and (\ref{Eq:LPNCG:SearchDirectionUpdate}).  The formulas for $\overline{\beta}_{k+1}$ in Equation (\ref{Eq:NPNCG:SearchDirectionUpdate}) are modified versions of the $\beta_{k+1}$ from Equations (\ref{Eq:NCG:BetaFR})-(\ref{Eq:NCG:BetaHS}) that incorporate $\overline{\bfg}_k$. However, there are several different ways to modify the $\beta_{k+1}$ to incorporate $\overline{\bfg}_k$, leading to several different variants of $\overline{\beta}_{k+1}$. Algorithm \ref{Alg:NPNCG} summarizes the PNCG algorithm, and Table \ref{Table:BetaFormulas} summarizes the variants of $\overline\beta_{k+1}$ we consider in this paper for PNCG.

%-------------------------------------------------------------------
\begin{algorithm}[ht]
 \SetAlgoLined
 \KwIn{$\bfx_0$}
 Evaluate $\overline{\bfx}_0 = P(\bfx_0)$\;
 Set $\overline{\bfg}_0 = \bfx_0 - \overline{\bfx}_0$\;
 Set $\bfp_0 \leftarrow -\overline{\bfg}_0, k \leftarrow 0$\;
 \While{$\bfg_k \neq 0$}{
  Compute $\alpha_k$\;
  $\bfx_{k+1} \leftarrow \bfx_k + \alpha_k\bfp_k$\;
  $\overline{\bfg}_{k+1} \leftarrow \bfx_{k+1} - P(\bfx_{k+1})$\;
  Compute $\overline{\beta}_{k+1}$\;
  $\bfp_{k+1} \leftarrow -\overline{\bfg}_{k+1}+\overline{\beta}_{k+1}\bfp_k$\;
  $k \leftarrow k +1$\;
   }
 \caption{Nonlinearly Preconditioned Nonlinear Conjugate Gradient Algorithm (PNCG)}
 \label{Alg:NPNCG}
\end{algorithm}
%-------------------------------------------------------------------

The first set of $\overline{\beta}_{k+1}$ variants we consider are the $\widetilde{\beta}_{k+1}$ shown in column 1 of Table \ref{Table:BetaFormulas}. The $\widetilde{\beta}_{k+1}$ formulas are derived by replacing all occurrences of $\bfg_k$ with $\overline{\bfg}_k$ in the formulas for $\beta_{k+1}$, Equations (\ref{Eq:NCG:BetaFR})-(\ref{Eq:NCG:BetaHS}):
\beq
\label{Eq:NPNCG:BetaFR1}
\widetilde{\beta}_{k+1}^{FR} = \frac{\overline{\bfg}_{k+1}^T\overline{\bfg}_{k+1}}{\overline{\bfg}_{k}^T\overline{\bfg}_{k}},
\eeq
\beq
\label{Eq:NPNCG:BetaPR1}
\widetilde{\beta}_{k+1}^{PR} = \frac{\overline{\bfg}_{k+1}^T(\overline{\bfg}_{k+1}-\overline{\bfg}_k)}{\overline{\bfg}_{k}^T\overline{\bfg}_{k}},
\eeq
\beq
\label{Eq:NPNCG:BetaHS1}
\widetilde{\beta}_{k+1}^{HS} = \frac{\overline{\bfg}_{k+1}^T(\overline{\bfg}_{k+1}-\overline{\bfg}_k)}{(\overline{\bfg}_{k+1}-\overline{\bfg}_k)^T\bfp_k}.
\eeq
This is a straightforward generalization of the $\beta_{k+1}$ expressions in Equations (\ref{Eq:NCG:BetaFR})-(\ref{Eq:NCG:BetaHS}), and the systematic numerical comparisons to be presented in Section \ref{Sec:Numerics} indicate that these choices lead to efficient PNCG methods. The PR variant of this formula is used in \cite{PETSC2013} in the context of PDE solvers.

However, the reader may note that Equations (\ref{Eq:LPNCG:BetaFR})-(\ref{Eq:LPNCG:BetaHS}) suggest different choices for the $\overline{\beta}_{k+1}$ formulas, variants which reduce to the PCG update formulas in the linear case. Indeed, suppose we apply Algorithm \ref{Alg:NPNCG} to the convex quadratic problem, (\ref{Eq:ConvexFunction}), with an exact line search, using a symmetric stationary linear iterative method such as symmetric Gauss-Seidel or Jacobi as a preconditioner. We begin by writing the stationary iterative method in general form as 
\beq
\overline{\bfx}_k = P(\bfx_k) = \bfx_k - \bfP\bfr_k,
\eeq
where the SPD preconditioning matrix $\bfP$ is often written as $\bfM^{-1}$ and $\bfr_k=\bfg_k$.
The search direction $\overline{\bfg}_k$ from Equation (\ref{Eq:ModifiedGradient}) simply becomes
\beq
\overline{\bfg}_k = \bfx_k - \overline{\bfx}_{k} = \bfP\bfr_k=\bfP\bfg_k.
\eeq
This immediately suggests a generalization of the linearly preconditioned NCG parameters $\breve{\beta}_{k+1}$ of Equations (\ref{Eq:LPNCG:BetaFR})-(\ref{Eq:LPNCG:BetaHS}) to the case of nonlinear preconditioning: replacing all occurrences of $\bfP\bfg_k$ with $\overline{\bfg}_k$ we obtain the expressions
\beq
\label{Eq:NPNCG:BetaFR2}
\widehat{\beta}_{k+1}^{FR} = \frac{{\bfg}_{k+1}^T\overline{\bfg}_{k+1}}{{\bfg}_{k}^T\overline{\bfg}_{k}},
\eeq
\beq
\label{Eq:NPNCG:BetaPR2}
\widehat{\beta}_{k+1}^{PR} = \frac{{\bfg}_{k+1}^T(\overline{\bfg}_{k+1}-\overline{\bfg}_k)}{{\bfg}_{k}^T\overline{\bfg}_{k}},
\eeq
\beq
\label{Eq:NPNCG:BetaHS2}
\widehat{\beta}_{k+1}^{HS} = \frac{{\bfg}_{k+1}^T(\overline{\bfg}_{k+1}-\overline{\bfg}_k)}{({\bfg}_{k+1}-{\bfg}_k)^T\bfp_k}.
\eeq

Expressions of this type have been used in \cite{BartelsDaniel1974,CGO1977,Mittlemann1980,Elad2010}.  It is clear that the PNCG algorithm with this second set of expressions, which are listed in the right column of Table \ref{Table:BetaFormulas}, reduces to the PCG algorithm in the linear case, since the $\widehat{\beta}_{k+1}$ reduce to the $\breve{\beta}_{k+1}$ in the case of a linear preconditioner, and the $\breve{\beta}_{k+1}$ in turn reduce to the $\beta_{k+1}$ from the PCG algorithm when solving an SPD linear system. For completeness, we state this formally in the following theorem.

%+++++++++++++++++++++++++++++++++++++++++++++++++++++++++
\begin{thm}
Let $\bfA$ and $\bfP$ be SPD matrices. Then PNCG (Algorithm \ref{Alg:NPNCG}) with $\widehat{\beta}_{k+1}^{FR}$, $\widehat{\beta}_{k+1}^{PR}$ or $\widehat{\beta}_{k+1}^{HS}$ of Table \ref{Table:BetaFormulas} applied to the convex quadratic problem $\phi(\bfx) = \frac{1}{2}\bfx^T\bfA\bfx - \bfb^T\bfx$ using an exact line search and a symmetric linear stationary iterative method with preconditioning matrix $\bfP$ as the preconditioner, reduces to PCG (Algorithm \ref{Alg:PCG}) applied to the linear system $\bfA \bfx=\bfb$ with the same preconditioner.
\end{thm}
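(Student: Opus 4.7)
The plan is to reduce the theorem in two stages, following the roadmap in the paragraph preceding the statement. First I would reduce PNCG with any of the $\widehat{\beta}_{k+1}$ formulas to the linearly preconditioned NCG algorithm with the corresponding $\breve{\beta}_{k+1}$ of Equations (\ref{Eq:LPNCG:BetaFR})--(\ref{Eq:LPNCG:BetaHS}). Since the preconditioner is a symmetric stationary linear iteration, $P(\bfx_k) = \bfx_k - \bfP\bfr_k$, and since on the quadratic objective $\bfg_k = \nabla\phi(\bfx_k) = \bfA\bfx_k - \bfb = \bfr_k$, the nonlinearly preconditioned direction satisfies
\[
\overline{\bfg}_k \;=\; \bfx_k - P(\bfx_k) \;=\; \bfP\bfr_k \;=\; \bfP\bfg_k.
\]
Substituting this identity everywhere into the three $\widehat{\beta}_{k+1}$ formulas yields exactly $\breve{\beta}_{k+1}^{FR}$, $\breve{\beta}_{k+1}^{PR}$, $\breve{\beta}_{k+1}^{HS}$; the initial direction becomes $\bfp_0 = -\overline{\bfg}_0 = -\bfP\bfg_0$, and the iterate update is unchanged. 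At this point PNCG has been rewritten, move for move, as linearly preconditioned NCG with preconditioner $\bfP$ applied to $\phi$.

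Second, I would show that on the quadratic $\phi$ with exact line search, the three linearly preconditioned NCG variants collapse to the single PCG formula $\beta_{k+1} = \bfr_{k+1}^T\bfP\bfr_{k+1}/(\bfr_k^T\bfP\bfr_k)$ and that the exact-line-search step length matches the PCG step length $\alpha_k = \bfr_k^T\bfP\bfr_k/(\bfp_k^T\bfA\bfp_k)$. This is the preconditioned analogue of the standard argument that $\beta^{FR}$, $\beta^{PR}$, and $\beta^{HS}$ coincide with $\beta^{CG}$ in the unpreconditioned linear case (see \cite{HagerZhang2006,NocedalWright2006}). I would proceed by induction on $k$, simultaneously establishing the $\bfP$-orthogonality $\bfg_i^T\bfP\bfg_j = 0$ of the gradients and the $\bfA$-conjugacy $\bfp_i^T\bfA\bfp_j = 0$ of the directions for $i\neq j$. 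The exact-line-search condition $\bfg_{k+1}^T\bfp_k = 0$, combined with the recurrence $\bfp_k = -\bfP\bfg_k + \breve{\beta}_k\bfp_{k-1}$ and the inductive $\bfg_k^T\bfp_{k-1}=0$, gives $\bfg_k^T\bfp_k = -\bfg_k^T\bfP\bfg_k$, so the NCG step length $\alpha_k = -\bfg_k^T\bfp_k/(\bfp_k^T\bfA\bfp_k)$ coincides with the PCG step length. The residual recurrence $\bfr_{k+1} = \bfr_k + \alpha_k\bfA\bfp_k$ follows from the iterate update, and the PR and HS numerators reduce to the FR numerator via the inductive relation $\bfg_{k+1}^T\bfP\bfg_k = 0$ and $\bfg_{k+1}^T\bfp_k = 0$.

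I expect the main obstacle to be the bookkeeping of this coupled induction: strictly speaking, one cannot invoke the PCG orthogonality and conjugacy relations a priori, because the iteration is formally being driven by $\widehat{\beta}$ (or equivalently $\breve{\beta}$) rather than by the PCG $\beta$. Hence the equivalences $\widehat{\beta}_{k+1}^{FR}=\widehat{\beta}_{k+1}^{PR}=\widehat{\beta}_{k+1}^{HS}=\beta_{k+1}^{PCG}$ and the orthogonalities $\bfg_{k+1}^T\bfP\bfg_j=0$, $\bfp_{k+1}^T\bfA\bfp_j=0$ for $j\leq k$ must be proved jointly by induction, in the spirit of the standard PCG/NCG correspondence argument. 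Once the induction closes, matching the initialization $\bfp_0 = -\bfP\bfr_0$ to PCG's initial direction yields that the PNCG iterates coincide with the PCG iterates for all $k$, completing the proof.
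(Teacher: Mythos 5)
Your two-stage plan is exactly what the paper does: the paper's ``proof'' consists of the observation (immediately preceding the theorem statement) that for a symmetric stationary linear iteration on a quadratic objective $\overline{\bfg}_k = \bfx_k - P(\bfx_k) = \bfP\bfr_k = \bfP\bfg_k$, hence $\widehat{\beta}_{k+1}$ reduces to $\breve{\beta}_{k+1}$, together with the earlier remark that the $\breve{\beta}_{k+1}$ formulas with exact line search reduce to the PCG $\beta_{k+1}$ ``in the same way as Equations (\ref{Eq:NCG:BetaFR})--(\ref{Eq:NCG:BetaHS}) are shown to be equivalent to Equation (\ref{Eq:CG:Beta})'' citing \cite{HagerZhang2006,NocedalWright2006}. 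You have simply spelled out the joint induction for the second step, which the paper delegates to the standard CG/PCG literature; the decomposition, the key identity $\overline{\bfg}_k = \bfP\bfg_k$, and the logical structure are identical.
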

%+++++++++++++++++++++++++++++++++++++++++++++++++++++++++

Thus, for the nonlinearly preconditioned NCG method, we have two sets of $\overline{\beta}_{k+1}$ formulas: the $\widehat{\beta}_{k+1}$ formulas have the property that the PNCG algorithm reduces to the PCG algorithm in the linear case, whereas the $\widetilde{\beta}_{k+1}$ formulas do not enjoy this property. Due to this property, one might expect the $\widehat{\beta}_{k+1}$ formulas to perform better, but our numerical tests show that this is not necessarily the case. Hence, we will use both the $\widetilde{\beta}_{k+1}$ and $\widehat{\beta}_{k+1}$ formulas in our numerical tests. 

Next we investigate aspects of convergence of the PNCG algorithm.  For the NCG algorithm without preconditioning, global convergence can be proved for the Fletcher-Reeves method applied to a broad class of objective functions, in the sense that
\beq
\liminf_{k \rightarrow \infty} \| \bfg_k \| = 0,
\eeq
when the line search satisfies the strong Wolfe conditions (see \cite{HagerZhang2006,NocedalWright2006} for a general discussion on NCG convergence).  Global convergence cannot be proved in general for the Polak-Ribi\`{e}re or Hestenes-Stiefel variants. Nevertheless, these methods are also widely used and may perform better than Fletcher-Reeves in practice. Global convergence can be proved for variants of these methods in which every search direction $\bfp_k$ is guaranteed to be a descent direction ($\bfg_k^T \bfp_k<0$), and in which the iteration is restarted periodically with a steepest-descent step.

It should come as no surprise that general convergence results for the PNCG algorithm are also difficult to obtain: use of a nonlinear preconditioner only exacerbates the already considerable theoretical difficulties in analyzing the convergence properties of these types of nonlinear optimization methods. However, with the use of the following theorem we will be able to establish global convergence for a restarted version of the $\widehat{\beta}^{FR}_k$ variant of the PNCG algorithm with a line search satisfying the strong Wolfe conditions, under the condition that the nonlinear preconditioner produces descent directions. Since the proof is dependent on the line search satisfying the strong Wolfe conditions we include the conditions for completeness.  The strong Wolfe conditions require the step length parameter, $\alpha_k$, in the update equation, $x_{k+1} = x_k + \alpha_kp_k$, of any line search method to satisfy the following:
\begin{align}
f(x_k + \alpha_kp_k) &\leq f(x_k) + c_1\alpha_k\nabla f_k^Tp_k, \label{Equation:WolfeSDC}\\
|\nabla f(x_k+\alpha_kp_k)^Tp_k | & \leq c_2 | \nabla f_k^Tp_k |, \label{Equation:WolfeCC}
\end{align}
with $0 < c_1 < c_2 < 1$. Condition (\ref{Equation:WolfeSDC}) is known as the \textit{sufficient decrease} or \textit{Armijo condition} and condition (\ref{Equation:WolfeCC}) is known as the \textit{curvature condition}.  The proof of our theorem relies on condition (\ref{Equation:WolfeCC}).  We will use this condition to help show that the PNCG search directions $\bfp_k$ obtained using $\widehat{\beta}^{FR}_k$ are descent directions when the nonlinear preconditioner produces descent directions.  To show this we follow the proof technique of Lemma 5.6 in \cite{NocedalWright2006}.

%+++++++++++++++++++++++++++++++++++++++++++++++++++++++++
\begin{thm} 
Consider the PNCG algorithm given in Algorithm \ref{Alg:NPNCG} with $\overline{\beta}_{k+1} = \widehat{\beta}^{FR}_{k+1}$ and where $\alpha_k$ satisfies the strong Wolfe conditions. Let $P(\bfx)$ be a nonlinear preconditioner such that $-\overline{\bfg}(\bfx_k)=P(\bfx_k)-\bfx_k$ is a descent direction for all $k$, i.e., $-\bfg_k^T\overline{\bfg}_k<0$. Suppose the objective function $f$ is bounded below in $\mathbb{R}^n$ and $f$ is continuously differentiable in an open set $\mathcal{N}$ containing the level set $\mathcal{L} := \{\bfx : f(\bfx) \leq f(\bfx_0)\}$, where $\bfx_0$ is the starting point of the iteration.  Assume also that the gradient $\bfg_k$ is Lipschitz continuous on $\mathcal{N}$.  Then,
\beq
\label{Eq:ZC}
\sum_{k\geq 0} \cos^2\theta_k \| \bfg_k\|^2 < \infty,
\eeq
where
\beq
\cos \theta_k =\frac{-\bfg_k^T \bfp_k}{\|\bfg_k\| \|\bfp_k\|}.
\eeq
\label{thm:convergence}
\end{thm}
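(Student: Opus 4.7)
The plan is to reduce the statement to the classical Zoutendijk condition by first establishing that the search directions $\bfp_k$ produced by the $\widehat{\beta}_{k+1}^{FR}$ variant of Algorithm \ref{Alg:NPNCG} are descent directions, i.e., $\bfg_k^T \bfp_k<0$ for all $k$. Once that is in hand, (\ref{Eq:ZC}) follows from a standard Zoutendijk-type argument (see, e.g., Theorem 3.2 of \cite{NocedalWright2006}): the Lipschitz continuity of $\bfg$ combined with the strong Wolfe conditions gives $f(\bfx_k)-f(\bfx_{k+1})\ge c\,\cos^2\theta_k \|\bfg_k\|^2$ for some $c>0$, and summing over $k$ together with the lower boundedness of $f$ produces a finite sum. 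The nontrivial work is therefore the descent verification, and the plan is to adapt the Fletcher--Reeves argument of Lemma 5.6 of \cite{NocedalWright2006}, with the normalization $\|\bfg_k\|^2$ replaced by the quantity $\bfg_k^T\overline{\bfg}_k$, which is positive by the preconditioner-descent hypothesis and plays the role previously played by $\|\bfg_k\|^2$.

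For the inductive descent argument, the base case is immediate: $\bfp_0=-\overline{\bfg}_0$ gives $\bfg_0^T\bfp_0=-\bfg_0^T\overline{\bfg}_0<0$. For the inductive step, taking the inner product of $\bfp_{k+1}=-\overline{\bfg}_{k+1}+\widehat{\beta}_{k+1}^{FR}\bfp_k$ with $\bfg_{k+1}$ and inserting the definition of $\widehat{\beta}_{k+1}^{FR}$ from Equation (\ref{Eq:NPNCG:BetaFR2}) yields, after dividing by $\bfg_{k+1}^T\overline{\bfg}_{k+1}>0$,
\[
t_{k+1} \;=\; -1 \;+\; \frac{\bfg_{k+1}^T\bfp_k}{\bfg_k^T\overline{\bfg}_k}, \qquad t_k:=\frac{\bfg_k^T\bfp_k}{\bfg_k^T\overline{\bfg}_k}.
\]
Under the inductive hypothesis $t_k<0$, the Wolfe curvature condition (\ref{Equation:WolfeCC}) gives $|\bfg_{k+1}^T\bfp_k|\le -c_2\,\bfg_k^T\bfp_k$; dividing by the positive quantity $\bfg_k^T\overline{\bfg}_k$ turns this into $c_2 t_k \le \bfg_{k+1}^T\bfp_k/\bfg_k^T\overline{\bfg}_k \le -c_2 t_k$, whence $-1+c_2 t_k \le t_{k+1} \le -1-c_2 t_k$. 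Starting from $t_0=-1$, a standard telescoping induction (as in \cite{NocedalWright2006}) then yields $-1/(1-c_2)\le t_k \le (2c_2-1)/(1-c_2)$ for all $k$, and in particular $t_k<0$ whenever $c_2<1/2$---the same strong-Wolfe restriction already required for unpreconditioned Fletcher--Reeves.

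With the descent property of $\bfp_k$ in hand, the Zoutendijk conclusion (\ref{Eq:ZC}) follows directly from the general Zoutendijk theorem under the hypotheses on $f$ and on the line search already present in the statement. The main obstacle, as the above sketch suggests, is ensuring that the Fletcher--Reeves bookkeeping carries over to the preconditioned setting: the clean cancellation of $\|\bfg_k\|^2$ terms that makes the unpreconditioned proof close has to be rerouted through the asymmetric bilinear form $\bfg_k^T\overline{\bfg}_k$, and this only works because (a) the preconditioner-descent hypothesis guarantees $\bfg_k^T\overline{\bfg}_k>0$ so the division defining $t_k$ is legal, and (b) the Wolfe curvature inequality, being a bound on $\bfg^T\bfp$ at consecutive iterates, couples the ratios $t_k$ and $t_{k+1}$ in exactly the affine fashion required to make the telescoping bound work. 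These two features are what allow the analog of Lemma 5.6 of \cite{NocedalWright2006} to carry through in the preconditioned setting.
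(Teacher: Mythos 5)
Your proposal is correct and matches the paper's proof essentially line for line: both adapt Lemma 5.6 of Nocedal and Wright by replacing $\|\bfg_k\|^2$ with $\bfg_k^T\overline{\bfg}_k$, both establish the same two-sided bound $-1/(1-c_2)\le \bfg_k^T\bfp_k/\bfg_k^T\overline{\bfg}_k\le (2c_2-1)/(1-c_2)$ by the same induction, and both note that the specific form of $\widehat{\beta}^{FR}_{k+1}$ makes the ratios telescope cleanly, after which Zoutendijk's theorem closes the argument. No meaningful differences.
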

%+++++++++++++++++++++++++++++++++++++++++++++++++++++++++
\begin{proof} 
We show that $\bfp_k$ is a descent direction, i.e., $\bfg_k^T\bfp_k < 0 \; \forall \; k$. Then condition (\ref{Eq:ZC}) follows directly from Theorem 3.2 of Nocedal and Wright \cite{NocedalWright2006} which states that condition (\ref{Eq:ZC}) holds for any iteration of the form $\bfx_{k+1} = \bfx_k + \alpha_k \bfp_k$ provided that the above conditions hold for $\alpha_k$, $f$ and $\bfg_k$, and where $\bfp_k$ is a descent direction.
\newline \newline
Instead of proving that $\bfg_k^T\bfp_k < 0$ directly, we will prove the following:
\beq
\label{Eq:ProofCondition}
-\frac{1}{1-c_2} \leq \frac{\bfg_k^T\bfp_k}{\bfg_k^T\overline{\bfg}_k} \leq \frac{2c_2-1}{1-c_2}, \;\; k \geq 0,
\eeq
where $0 < c_2 < \frac{1}{2}$ is the constant from the curvature condition of the strong Wolfe conditions:
\beq
\label{Eq:WolfeCurvatureCondition}
|\bfg_{k+1}^T\bfp_k| \leq c_2 |\bfg_k^T\bfp_k|.
\eeq
Note, that the function $t(\xi) = (2\xi-1)/(1-\xi)$ is monotonically increasing on the interval $[0,\frac{1}{2}]$ and that $t(0) = -1$ and $t(\frac{1}{2}) = 0$. Thus, because $c_2 \in (0,\frac{1}{2})$, we have
\beq
\label{Eq:UpperBound}
-1 < \frac{2c_2-1}{1-c_2} < 0.
\eeq
Also note that since $-\overline{\bfg}_k$ is a descent direction, $\bfg_k^T\overline{\bfg}_k > 0$.  So, if (\ref{Eq:ProofCondition}) holds then $\bfg_k^T\bfp_k < 0$ and $\bfp_k$ is a descent direction. 

We use an inductive proof to show that (\ref{Eq:ProofCondition}) is true. For $k = 0$, we use the definition of $\bfp_0$ to get,
\beq
\frac{\bfg_0^T\bfp_0}{\bfg_0^T\overline{\bfg}_0} = \frac{-\bfg_0^T\overline{\bfg}_0}{\bfg_0^T\overline{\bfg}_0} = -1.
\eeq
From (\ref{Eq:UpperBound}) we have
\beq
\frac{\bfg_0^T\bfp_0}{\bfg_0^T\overline{\bfg}_0} = -1 \leq \frac{2c_2-1}{1-c_2}.
\eeq
Note, that the function $t(\xi) = -1/(1-\xi)$ is monotonically decreasing on the interval $[0,\frac{1}{2}]$ and that $t(0) = -1$ and $t(\frac{1}{2}) = -2$. Thus, because $c_2 \in (0,\frac{1}{2})$, we have
\beq
\label{Eq:LowerBound}
-2 < -\frac{1}{1-c_2} < -1.
\eeq
Thus,
\beq
\frac{\bfg_0^T\bfp_0}{\bfg_0^T\overline{\bfg}_0} = -1 \geq -\frac{1}{1-c_2}.
\eeq
Now suppose
\beq
-\frac{1}{1-c_2} \leq \frac{\bfg_l^T\bfp_l}{\bfg_l^T\overline{\bfg}_l} \leq \frac{2c_2-1}{1-c_2}, \;\; l = 1,\ldots,k.
\eeq
We need to show that (\ref{Eq:ProofCondition}) is true for $k+1$.  Using the definition of $\bfp_{k+1}$ we have, 
\begin{align}
\frac{\bfg_{k+1}^T\bfp_{k+1}}{\bfg_{k+1}^T\overline{\bfg}_{k+1}} & = \frac{\bfg_{k+1}^T\left(-\overline{\bfg}_{k+1} + \widehat{\beta}_{k+1}^{FR}\bfp_{k}\right)}{\bfg_{k+1}^T\overline{\bfg}_{k+1}} \notag \\
& = -1 + \widehat{\beta}_{k+1}^{FR}\frac{\bfg_{k+1}^T\bfp_{k}}{\bfg_{k+1}^T\overline{\bfg}_{k+1}}.
\label{Eq:PKDef} 
\end{align}
From the Wolfe condition, Equation (\ref{Eq:WolfeCurvatureCondition}), and the inductive hypothesis, which implies that $\bfg_k^T\bfp_k < 0$, we can write
\beq
c_2 \bfg_k^T\bfp_k \leq \bfg_{k+1}^T\bfp_k \leq - c_2 \bfg_k^T\bfp_k.
\eeq
Combining this with Equation (\ref{Eq:PKDef}), we have
\beq
-1 + c_2\widehat{\beta}_{k+1}^{FR}\frac{\bfg_{k}^T\bfp_{k}}{\bfg_{k+1}^T\overline{\bfg}_{k+1}} \leq \frac{\bfg_{k+1}^T\bfp_{k+1}}{\bfg_{k+1}^T\overline{\bfg}_{k+1}} \leq -1 - c_2\widehat{\beta}_{k+1}^{FR}\frac{\bfg_{k}^T\bfp_{k}}{\bfg_{k+1}^T\overline{\bfg}_{k+1}}
\eeq
So,
\begin{align*}
\frac{\bfg_{k+1}^T\bfp_{k+1}}{\bfg_{k+1}^T\overline{\bfg}_{k+1}} & \geq -1 + c_2\widehat{\beta}_{k+1}^{FR}\frac{\bfg_{k}^T\bfp_{k}}{\bfg_{k+1}^T\overline{\bfg}_{k+1}} \\
& = -1 + c_2\left(\frac{\bfg_{k+1}^T\overline{\bfg}_{k+1}}{\bfg_k^T\overline{\bfg}_k}\right)\frac{\bfg_{k}^T\bfp_{k}}{\bfg_{k+1}^T\overline{\bfg}_{k+1}} \\
& = -1 + c_2\left(\frac{\bfg_{k}^T\bfp_{k}}{\bfg_k^T\overline{\bfg}_k}\right)\\
& \geq -1 - \frac{c_2}{1-c_2}\\
& = -\frac{1}{1-c_2},
\end{align*}
and
\begin{align*}
\frac{\bfg_{k+1}^T\bfp_{k+1}}{\bfg_{k+1}^T\overline{\bfg}_{k+1}} & \leq -1 - c_2\widehat{\beta}_{k+1}^{FR}\frac{\bfg_{k}^T\bfp_{k}}{\bfg_{k+1}^T\overline{\bfg}_{k+1}} \\
& = -1 - c_2\left(\frac{\bfg_{k+1}^T\overline{\bfg}_{k+1}}{\bfg_k^T\overline{\bfg}_k}\right)\frac{\bfg_{k}^T\bfp_{k}}{\bfg_{k+1}^T\overline{\bfg}_{k+1}} \\
& = -1 - c_2\left(\frac{\bfg_{k}^T\bfp_{k}}{\bfg_k^T\overline{\bfg}_k}\right)\\
& \leq -1 + \frac{c_2}{1-c_2}\\
& = \frac{2c_2-1}{1-c_2}.
\end{align*}
\end{proof}
%+++++++++++++++++++++++++++++++++++++++++++++++++++++++++

We can now easily establish that convergence holds for a restarted version of the PNCG algorithm with $\widehat{\beta}^{FR}_{k+1}$ if a nonlinear preconditioner is used that produces descent directions: If we use the steepest decent direction as the search direction on every $m$th iteration of the algorithm and then restart the PNCG algorithm with $\bfp_{m+1} = -\overline{\bfg}_{m+1} = -\bfx_m+P(\bfx_m)$, then Equation (\ref{Eq:ZC}) of Theorem \ref{thm:convergence} is still satisfied for the combined process and
\beq
\liminf_{k \rightarrow \infty} \| \bfg_k \| = 0,
\eeq
since $\cos\theta_k = 1$ for the steepest descent steps \cite{NocedalWright2006}. Thus we are guaranteed overall global convergence for this method. Note that the proof for global convergence of NCG using $\beta^{FR}_{k+1}$ without restarting (Theorem 5.7 in \cite{NocedalWright2006}) does not carry over to the case of unrestarted PNCG with $\widehat{\beta}^{FR}_{k+1}$.  

For (\ref{Eq:ZC}) to hold we must assume that $\beta_{k+1}=\widehat{\beta}^{FR}_{k+1}$.  However, if we use a more restrictive line search we can show that (\ref{Eq:ZC}) holds for any variant of $\beta_{k+1}$ provided that the remaining assumptions of Theorem 2 hold.  Suppose we use an ``ideal'' line search at every step of the PNCG algorithm where a line search is considered ideal if $\alpha_k$ is a stationary point of $f(\bfx_k+\alpha_k\bfp_k)$. If $\alpha_k$ is a stationary point of $f(\bfx_k+\alpha_k\bfp_k)$ then $\nabla f(\bfx_k+\alpha_k\bfp_k)^T\bfp_k = \bfg_{k+1}^T\bfp_k = 0$ and using the definition of $\bfp_k$ we have
\begin{align*}
\bfg_k^T\bfp_k & = \bfg_{k}^T\left(-\overline{\bfg}_{k} + {\beta}_{k}\bfp_{k-1}\right) \\
& = -\bfg_{k}^T\overline{\bfg}_{k} + {\beta}_{k}\bfg_{k}^T\bfp_{k-1} \\
& = -\bfg_{k}^T\overline{\bfg}_{k} < 0,
\end{align*}
provided $-\overline{\bfg}_{k}$ is a descent direction.  Thus, $\bfp_k$ is a descent direction for all $k$ and (\ref{Eq:ZC}) holds for all variants of $\beta_{k+1}$.  This implies that any restarted version of the PNCG algorithm with an ideal line search is guaranteed to converge provided $-\overline{\bfg}_{k}$ is a descent direction. See \cite{CGO1977} for a similar proof when $\overline{\bfg}_{k} = \textbf{M}^{-1}\bfg_k$ and $\textbf{M}$ is a positive definite matrix, which guarantees that $-\overline{\bfg}_{k}$ is a descent direction.  We should also note that performing an ideal line search at every step of the PNCG algorithm is often prohibitively expensive and thus not used in practice.

Both convergence results require that the nonlinearly preconditioned directions $-\overline{\bfg}_k=P(\bfx_k)-\bfx_k$ be descent directions. If one assumes a continuous preconditioning function $P(\bfx)$ such that $-\overline{\bfg}(\bfx)=P(\bfx)-\bfx$ is a descent direction for all $\bfx$ in a neighbourhood of an isolated local minimizer $\bfx^*$ of a continuously differentiable objective function $f(\bfx)$, then this implies that the nonlinear preconditioner satisfies the fixed-point condition $\bfx^*=P(\bfx^*)$, which is a natural condition for a nonlinear preconditioner. It is often the case in nonlinear optimization that convergence results only hold under restrictive conditions and are mainly of theoretical value.  In practice, numerical results may show satisfactory convergence behaviour for much broader classes of problems.  Our numerical results will show that this is also the case for PNCG applied to canonical tensor decomposition: While the ALS preconditioner satisfies the fixed-point property, it is not guaranteed to produce descent directions. Nevertheless, convergence was generally observed numerically for all the PNCG variants we considered, with the $\widetilde{\beta}_{PR}$ variant producing the fastest results in most cases.

%---------------------------------------------------------------------
\subsection{Application of the PNCG Algorithm to the CP Optimization Problem}
%---------------------------------------------------------------------

Thus far we have described the PNCG algorithm in very general terms.  The algorithm can be applied to any continuously differentiable function bounded from below using any nonlinear iterative method as a preconditioner.  We now discuss how to apply Algorithm \ref{Alg:NPNCG} to the CP optimization problem.  The two quantities that are most important in the computation of Algorithm \ref{Alg:NPNCG} are the gradient, $\bfg_k$, and the preconditioned value, $P(\bfx_k)$. Not only is the gradient used in some of the formulas for $\beta_{k+1}$, it is also used in calculating the step length parameter, $\alpha_k$.  We choose to use the ALS algorithm as a preconditioner since it is the standard algorithm used to solve the CP decomposition problem.  We briefly revisit the ALS algorithm before discussing the computation of the gradient, $\bfg_k$. The CP optimization problem for an $N$-way tensor $\mathscr{X} \in \mathbb{R}^{I_1 \times \cdots \times I_N}$ is given by
\beq
\min f(\bfA^{(1)}, \ldots, \bfA^{(N)}) = \frac{1}{2}\| \mathscr{X} - \llbracket \bfA^{(1)}, \ldots, \bfA^{(N)}\rrbracket \|^2_F,
\label{Eq:CPMinProblem}
\eeq
where $\bfA^{(n)}, \; n = 1,\ldots, N$, is a factor matrix of size $I_n \times R$ and the following size parameters are defined:
\beq
K = \prod_{l=1}^N I_l, \;\;\;\; \overline{K}^{(n)}  = \prod^N_{l=1, l \neq n} I_l.
\eeq 
Rather than solve (\ref{Eq:CPMinProblem}) for $\bfA^{(1)}$ through $\bfA^{(N)}$ simultaneously, the ALS algorithm solves for each factor matrix one at a time.  The exact solution for each factor matrix is given by 
\beq
\bfA^{(n)} = \mathbf{X}_{(n)}\bfA^{(-n)}(\mathbf{\Gamma}^{(n)})^{\dagger},
\label{Eq:ALSSimplified}
\eeq
where
\beq
\mathbf{\Gamma}^{(n)} = \mathbf{\Upsilon}^{(1)}  * \cdots * \mathbf{\Upsilon}^{(n-1)}  * \mathbf{\Upsilon}^{(n+1)}  * \cdots * \mathbf{\Upsilon}^{(N)},
\eeq
\beq
\mathbf{\Upsilon}^{(n)} = \bfA^{(n)T} \bfA^{(n)},
\eeq
\beq
\bfA^{(-n)} = \bfA^{(N)} \odot \cdots  \odot \bfA^{(n+1)}  \odot \bfA^{(n-1)} \odot \cdots  \odot\bfA^{(1)}, 
\eeq
where $\odot$ is the Khatri-Rho product \cite{KoldaBader2009}, $*$ denotes the elementwise product, and $\mathbf{X}_{(n)} \in \mathbb{R}^{I_n \times \overline{K}^{(n)}}$ is the mode-$n$ matricization of $\mathscr{X}$, obtained by stacking the $n$-mode fibers of $\mathscr{X}$ in its columns in a regular way as defined in \cite{KoldaBader2009}.  For more details of the derivation of Equation (\ref{Eq:ALSSimplified}) see \cite{KoldaBader2009}.

The primary cost of solving for $\bfA^{(n)}$ is multiplying the matricized tensor, $\mathbf{X}_{(n)}$,  with the Khatri-Rao product, $\bfA^{(-n)}$.  The matrix $\mathbf{X}_{(n)}$ is of size $I_n \times \overline{K}^{(n)}$  and $\bfA^{(-n)}$ is of size $\overline{K}^{(n)} \times R$ where $\overline{K}^{(n)} = K/I_n$. Thus the cost of computing Equation (\ref{Eq:ALSSimplified}), measured in terms of the number of operations, is $O(KR)$.  One iteration of the ALS algorithm requires us to solve for each factor matrix, thus each iteration of the ALS algorithm has a computational cost of $O(NKR)$.   

We now discuss the gradient of the objective function in (\ref{Eq:CPMinProblem}).  It can be written as a vector of matrices
\beq
\nabla f(\mathscr{A}_R) = \mathbf{G}(\mathscr{A}_R) = (\mathbf{G}^{(1)},\ldots, \mathbf{G}^{(n)}),
\eeq
where $\mathbf{G}^{(n)} \in \mathbb{R}^{I_n \times R}, \; n = 1,\ldots, N$.  Each matrix $\mathbf{G}^{(n)}, \; n = 1,\ldots, N$, is given by
\beq
\mathbf{G}^{(n)} = -\mathbf{X}_{(n)}\bfA^{(-n)} + \bfA^{(n)}\mathbf{\Gamma}^{(n)}.
\label{Eq:Gradient}
\eeq
The derivation of (\ref{Eq:Gradient}) can be found in \cite{ADK2011}.  From Equation (\ref{Eq:Gradient}) we can see that the cost of computing one gradient matrix, $\mathbf{G}^{(n)}$, is dominated by the calculation of $\mathbf{X}_{(n)}\bfA^{(-n)}$ and thus the computational cost of computing the gradient, $\nabla f(\mathscr{A}_R)$, is $O(NKR)$, the same as one iteration of the ALS algorithm.  

%%%%%%%%%%%%%%%%%%%%%%%%%%%%%%%%%%%%%%%%
\section{Numerical Results}
\label{Sec:Numerics}
%%%%%%%%%%%%%%%%%%%%%%%%%%%%%%%%%%%%%%%%

To test our PNCG algorithm we randomly generate artificial tensors of different sizes, ranks, collinearity, and heteroskedastic and homoskedastic noise, which constitute standard test problems for CP decomposition \cite{TomasiBro2006}. We then compare the performance of the CP factorization using the PNCG algorithm with results from using the ALS and NCG algorithms.

%---------------------------------------------------------------------
\subsection{Problem Description}
%---------------------------------------------------------------------

The artificial tensors are generated using the methodology of \cite{TomasiBro2006}.  All the tensors we consider are 3-way tensors.  Each dimension has the same size but we consider tensors of three different sizes, $I = 20$, 50 and 100.  The factor matrices $\bfA^{(1)}$, $\bfA^{(2)}$, and $\bfA^{(3)}$ are generated randomly so that the collinearity of the factors in each mode is set to a particular level $C$.  The steps necessary to create the factor matrices are outlined in \cite{TomasiBro2006}.  Thus,
\beq
\frac{\mathbf{a}_r^{(n)T}\mathbf{a}_s^{(n)}}{\|\mathbf{a}_r^{(n)}\|\|\mathbf{a}_s^{(n)}\|} = C,
\eeq
for $r \neq s,\;r,s=1,\ldots,R$ and $n = 1,2,3$. As in \cite{ADK2011}, the values of $C$ we consider are $0.5$ and $0.9$, where higher values of $C$ make the problem more difficult.  We consider two different values for the rank, $R = 3$ and $R = 5$.   For each combination of $R$ and $C$ we generate a set of factor matrices.  Once we have converted these factors into a tensor and added noise our goal is to recover these underlying factors using the different optimization algorithms.  From a given set of factor matrices we are able to generate nine test tensors by adding different levels of homoskedastic and heteroskedastic noise.  Homoskedastic noise refers to noise with constant variance whereas heteroskedastic noise refers to noise with differing variance.  The noise ratios we consider for homoskedastic and heteroskedastic noise are $l_1 = 1, 5, 10$ and $l_2 = 0, 1, 5$, respectively, see \cite{TomasiBro2006,ADK2011}.  Suppose $\mathscr{N}_1$,$\mathscr{N}_2 \in \mathbb{R}^{I_1 \times \cdots \times I_N}$ are random tensors with entries chosen from a standard normal distribution.  Then we generate the test tensors as follows.  Let the original tensor be
\beq
\mathscr{X} =  \llbracket \mathbf{A}^{(1)}, \mathbf{A}^{(2)}, \mathbf{A}^{(3)} \rrbracket.
\eeq
Homoskedastic noise is added to give:
\beq
\mathscr{X}' = \mathscr{X} + ({100}/{l_1} - 1)^{-\frac{1}{2}}\frac{\|\mathscr{X}\|}{\|\mathscr{N}_1\|}\mathscr{N}_1,
\eeq
and then heteroskedastic noise is added to give:
\beq
\mathscr{X}'' = \mathscr{X}' + ({100}/{\mathit{l}_2} - 1)^{-\frac{1}{2}}\frac{\|\mathscr{X}'\|}{\|\mathscr{N}_2 *\mathscr{X}'\|}\mathscr{N}_2 *\mathscr{X}'.
\eeq
The optimization algorithms are applied to the test tensor $\mathscr{X}''$ and in the case where $l_2 = 0$, $\mathscr{X}'' = \mathscr{X}'$. To test the performance of each optimization algorithm we apply the algorithm to the test tensor $\mathscr{X}''$ using 20 different random starting values, where the same 20 starting values are used for each algorithm. Thus for each size, $I = 20, 50$ and $100$ we generate 36 test tensors since we consider 2 different ranks, 2 different collinearity values, 1 set of factor matrices for each combination of $C$ and $R$ and 9 different levels of noise and for each of these test tensors we apply a given optimization algorithm 20 different times using different random starting values.

%---------------------------------------------------------------------
\subsection{Results}
%---------------------------------------------------------------------

We begin by presenting numerical results for the smallest case where $I = 20$. All numerical experiments where performed on a Linux Workstation with a Quad-Core Intel Xeon 3.16GHz processor and 8GB RAM. We use the NCG algorithm from the Poblano toolbox for MATLAB \cite{Poblano} which uses the Mor\`{e}-Thuente line search algorithm.  We use the same line search algorithm for the PNCG algorithm.  The line search parameters are as follows:  $10^{-4}$ for the sufficient decrease condition tolerance, $10^{-2}$ for the curvature condition tolerance, an initial step length of 1 and a maximum of 20 iterations.  The ALS algorithm we use is from the tensor toolbox for MATLAB \cite{TensorToolbox}; however, we use a different normalization of the factors (as explained below) and we use the gradient norm as a stopping condition instead of the relative function change.  In the CP decomposition, it is often useful to assume that the columns of the factor matrices, $\bfA^{(n)}$, are normalized to length one with the weights absorbed into a vector $\boldsymbol\lambda \in \mathbb{R}^k$.  Thus
\beq
\mathscr{X} \approx \sum_{r=1}^k \lambda_r a_r^{(1)} \circ \ldots \circ a_r^{(N)}.
\eeq
In our ALS algorithm the factors are normalized such that $\boldsymbol\lambda$ is distributed evenly over all the factors.  Also note that, while the gradient norm is used as a stopping condition for the ALS algorithm, the calculation of the gradient is not included in the timing results for the ALS algorithm.  For all three algorithms, ALS, NCG and PNCG, there are three stopping conditions; all are set to the same value for each algorithm.  They are as follows: $10^{-9}$ for the gradient norm divided by the number of variables, $\|\textbf{G}(\mathscr{A}_R)\|_2/N$ where $N$ is the number of variables in $\mathscr{X}$, $10^4$ for the maximum number of iterations and $10^5$ for the maximum number of function evaluations.   

For $I = 20$ and $R=3$, Table \ref{Table:SummarizeR3} summarizes the results for each algorithm while Table \ref{Table:SummarizeR5} summarizes the results for $I = 20$ and $R = 5$.    For each value of the rank, $R$, there are two possible values for the collinearity, $C = 0.5$ and $C = 0.9$.   Once the collinearity has been fixed, a test tensor is created and there are nine different combinations of homoskedastic and heteroskedastic noise added to each test tensor.  We then generate 20 different initial guesses with components chosen randomly from a uniform distribution between 0 and 1.  Each algorithm is tested using each of these initial guesses. Thus, for each collinearity value there are 180 CP decompositions performed by each algorithm.  Each table reports the overall timings for the 180 CP decompositions. The timing is written in the form $a \pm b$ where $a$ is the mean time and $b$ is the standard deviation.  The first number in brackets represents the number of CP decompositions that converge before reaching the maximum number of iterations or function evaluations out of a possible 180.  All timing calculations are performed for the converged runs only.  The second number in brackets represents the number of runs where the algorithm is able to recover the original set of factor matrices.  We use a measure, defined in \cite{TomasiBro2006}, known as congruence to determine if an algorithm is able to recover the original factors where the congruence between two rank-one tensors, $\mathscr{X} = \mathbf{a} \circ \mathbf{b} \circ \mathbf{c}$ and $\mathscr{Y} = \mathbf{p} \circ \mathbf{q} \circ \mathbf{r}$ is defined as 
\beq
\mbox{cong}(\mathscr{X},\mathscr{Y}) = \frac{|\mathbf{a}^T\mathbf{p}|}{\|\mathbf{a}\|\|\mathbf{p}\|} \cdot \frac{|\mathbf{b}^T\mathbf{q}|}{\|\mathbf{b}\|\|\mathbf{q}\|} \cdot \frac{|\mathbf{c}^T\mathbf{r}|}{\|\mathbf{c}\|\|\mathbf{r}\|}.
\eeq
If the congruence is above 0.97 ($\approx 0.99^3$) for every component rank-one tensor then we say that the algorithm has successfully recovered the original factor matrices.  Since the CP decomposition is unique up to a permutation of the component rank-one tensors, we consider all permutations when calculating congruences and choose the permutation that results in the greatest sum of congruences of rank-one tensors. We also calculate the congruences for all runs regardless of whether or not they converge.  

From the results it is clear that when $C = 0.5$, ALS is the fastest algorithm.  The results also indicate that for a given formula for $\beta$, NCG is faster that either PNCG algorithm.  However, when the collinearity is 0.5, it is known that the problem is relatively easy \cite{TomasiBro2006,ADK2011,DeSterck2012}, so we don't necessarily expect the preconditioned algorithm to outperform the standard algorithm, and the additional time needed to perform the preconditioning may actually slow the algorithm down relative to the original algorithm.  The results change when we look at the more difficult problem, $C=0.9$.   In this case, the PNCG algorithm with the $\widetilde{\beta}^{PR}$ variant is the fastest. The ALS algorithm is the slowest and for a given formula for $\beta$, both PNCG algorithms are faster than the NCG algorithm by a factor between 2 and 4: nonlinear preconditioning significantly speeds up the NCG algorithm.  The one exception is for $R = 3$ where the PNCG algorithm with $\widetilde{\beta}^{HS}$ is slower than the NCG algorithm.  However, in this case the number of convergent runs for $\widetilde{\beta}^{HS}$ is $100\%$ while only $92.78\%$ of the runs for $\beta^{HS}$ are convergent.  We also see from Tables \ref{Table:SummarizeR3} and \ref{Table:SummarizeR5} that the number of times each algorithm is able to recover the original factor matrices successfully is approximately the same for each algorithm for a given combination of $R$ and $C$.  In the case where $R = 5$ and $C = 0.9$, this number is quite low, however, these results closely match the results found in \cite{Acar2011} and we note that all of the successful runs occur when there is very little noise ($\mathit{l}_1=1$ and $\mathit{l}_2$=0). 

Returning to the timing results displayed in Tables \ref{Table:SummarizeR3} and \ref{Table:SummarizeR5}, we recognize that the results may be dominated by a small number of difficult problems.  Even with fixed problem parameters, a problem can be difficult (and require a large amount of iterations to converge) or easy depending on the particular random realization of the test tensor and/or the initial guess. Including the standard deviation helps to describe the effects of this bias; however, the timing results don't account for the problems where the algorithm fails to converge within the prescribed resource limit.  One way to overcome this is to use the performance profiles suggested by Dolan and Mor\'e in \cite{Dolan2002}.  

Suppose that we want to compare the performance of a set of algorithms or solvers $\mathscr{S}$ on a test set $\mathscr{P}$.  Suppose there are $n_s$ algorithms and $n_p$ problems.  For each problem $p \in \mathscr{P}$ and algorithm $s \in \mathscr{S}$ let $t_{p,s}$ be the computing time required to solve problem $p$ using algorithm $s$.  In order to compare algorithms we use the best performance by any algorithm as a baseline and define the performance ratio as 
\beq
r_{p,s} = \frac{t_{p,s}}{\min\{t_{p,s}: \; s \in \mathscr{S}\}}.
\eeq
Although we may be interested in the performance of algorithm $s$ on a given problem $p$, a more insightful analysis can be performed if we can obtain an overall assessment of the algorithm's performance. We can do this by defining the following:
\beq
\rho_s(\tau) = \frac{1}{n_p}\mbox{size}\{p \in \mathscr{P}: r_{p,s} \leq \tau \}.
\eeq
For algorithm $s \in \mathscr{S}$, $\rho_s(\tau)$ is the fraction of problems $p$ for which the performance ratio $r_{p,s}$ is within a factor $\tau \in \mathbb{R}$ of the best ratio (which equals one). Thus, $\rho_s(\tau)$ is the cumulative distribution function for the performance ratio and we refer to it as the performance profile.  By visually examining the performance profiles of each algorithm we can compare the algorithms in $\mathscr{S}$.  In particular, algorithms with large fractions $\rho_s(\tau)$ are preferred.  

Since the performance profile, $\rho_s: \mathbb{R} \mapsto [0,1]$, is a cumulative distribution function it is nondecreasing.  In addition, it is a piecewise constant function, continuous from the right at each breakpoint.  The value of $\rho_s$ at $\tau = 1$ is the fraction of problems for which the algorithm wins over the rest of the algorithms.  In other words,  $\rho_s(1)$ is the fraction of wins for each solver.  For larger values of $\tau$, algorithms with high values of $\rho_s$ relative to the other algorithms indicate robust solvers.  

To examine the performance profiles of each algorithm more easily we group the NCG and PNCG algorithms according to formula for $\beta_{k+1}$, either FR, PR or HS.  Thus 
\begin{eqnarray}
\mathscr{S}_1 &= \{ \mbox{ALS, NCG with $\beta^{FR}$, PNCG with $\widetilde{\beta}^{FR}$, PNCG with $\widehat{\beta}^{FR}$ }\},\\
\mathscr{S}_2 &= \{ \mbox{ALS, NCG with $\beta^{PR}$, PNCG with $\widetilde{\beta}^{PR}$, PNCG with $\widehat{\beta}^{PR}$ }\},\\
\mathscr{S}_3 &= \{ \mbox{ALS, NCG with $\beta^{HS}$, PNCG with $\widetilde{\beta}^{HS}$, PNCG with $\widehat{\beta}^{HS}$ }\}.
\end{eqnarray}

Figure \ref{fig:FR_20} plots the performance profiles of the algorithms in $\mathscr{S}_1$.  In Figure \subref*{fig:FR_20_3_5}, $R = 3$ and $C = 0.5$.  This is an easy problem and from the performance profiles we can see that not only is ALS the fastest it is also the most robust.  We can increase the difficulty of the problem by increasing the collinearity to $0.9$ and Figure \subref*{fig:FR_20_3_9} shows the performance profiles of each algorithm in $\mathscr{S}_1$ when $C = 0.9$.  Since $\rho_s(1)$ indicates what fraction of the 180 trials each algorithm is the fastest, we see that PNCG with $\widetilde{\beta}_{FR}$ is the fastest algorithm in the largest percentage of runs. When $\tau = 3$ approximately $70\%$ of the 180 NCG runs are within three times the fastest time and approximately $40\%$ of the ALS runs are within three times the fastest time.  However, as $\tau$ increases to $10$ we notice that approximately all of the ALS and PNCG runs are within ten times the fastest time but only $90\%$ of the NCG runs are within ten times the fastest time.  This suggests that the NCG algorithm without nonlinear preconditioning is not nearly as robust as the other algorithms.  In Figures \subref*{fig:FR_20_5_5} and \subref*{fig:FR_20_5_9}, $R = 5$ and $C = 0.5$ and 0.9 respectively.  For $C=0.5$, the performance profiles look similar in Figures \subref*{fig:FR_20_3_5} and \subref*{fig:FR_20_5_5} where $R=3$ and  5 respectively.  For $C = 0.9$, the performance profiles in Figure \subref*{fig:FR_20_3_9} where $R=3$ and Figure \subref*{fig:FR_20_5_9} where $R = 5$ differ, however, in both cases, PNCG with $\widetilde{\beta}^{FR}$ is the fastest in the largest percentage of runs and NCG is the least robust algorithm having the smallest value at $\tau = 10$.

Figures \ref{fig:PR_20} and \ref{fig:HS_20} plot the performance profiles for the algorithms in $\mathscr{S}_2$ and $\mathscr{S}_3$, respectively. Once again we see similar results as those displayed in Figure \ref{fig:FR_20}. 

Our next challenge is to examine the performance of the PNCG algorithm when we increase the tensor size.  To better understand the performance, we focus on the results for the algorithms in $\mathscr{S}_2$ since the results are similar for the algorithms in $\mathscr{S}_1$ and $\mathscr{S}_3$.  We consider two different size parameters, $I = 50$ and $I = 100$.  Table \ref{Table:SummarizeI50} reports the timing results when $I = 50$ and Table \ref{Table:SummarizeI100} contains the results when $I = 100$.  As we increase the size of the tensors we see that the results remain similar to the case where $I = 20$.  Regardless of the rank, $R$, the easy problem for which the collinearity is $0.5$, can easily be solved by ALS.  Again, when we move to the more difficult problem of $C =0.9$,  the PNCG algorithms perform the best (except for $I=100$ and $R=5$).  These results are further reflected in the performance profiles shown in Figures \ref{fig:PR_50} and \ref{fig:PR_100}. ALS dominates regardless of rank and size when $C=0.5$, but Figures \subref*{fig:PR_50_3_9}, \subref*{fig:PR_50_5_9}, \subref*{fig:PR_100_3_9}, and \subref*{fig:PR_100_5_9} suggest that PNCG with the $\widetilde{\beta}^{PR}$ variant is the fastest for $C=0.9$ except when $I = 100$ and $R = 5$, where ALS appears faster.  The figures also indicate that the NCG algorithm without nonlinear preconditioning is the least robust.  In the case when $I = 100$, $R = 5$ and the collinearity is $0.9$, we note that Table \ref{Table:SummarizeI100} shows that the ALS algorithm is the fastest on average, while Figure \subref*{fig:PR_100_5_9} shows that the fastest run is most often for PNCG with the $\widetilde{\beta}^{PR}$ variant. Both variants of the PNCG algorithm are more robust than the NCG algorithm, while ALS is the most robust in this case. So we can say that, while PNCG appears significantly faster than ALS for all difficult ($C = 0.9$) problems when the number of factors $R$ and the tensor size $I$ are relatively small, ALS becomes competitive again with PNCG when $R$ and $I$ are large.  Note, however, that the line search parameters in the NCG and PNCG algorithms were the same for every problem, and it may be possible to improve both the NCG and PNCG results by fine-tuning these parameters.  We also see from Tables \ref{Table:SummarizeI50} and \ref{Table:SummarizeI100} that the ability of NCG to successfully recover the original factor matrices is less than both PNCG variants and ALS in some cases. When $I=50$ and $C=0.9$ the difference is small for both $R=3$ and $R=5$.  The difference is more significant when $I = 100$, $C=0.9$ and $R=5$, while there is no difference when $R=3$.  In all cases, the number of successes is essentially the same for both variants of PNCG and ALS.  Thus, the main conclusion from our numerical tests is that nonlinear preconditioning can dramatically improve the speed and robustness of NCG: PNCG is significantly faster and more robust than NCG for all difficult ($C = 0.9$) CP problems we tested.

%%%%%%%%%%%%%%%%%%%%%%%%%%%%%%%%%%%%%%%%
\section{Conclusion}
%%%%%%%%%%%%%%%%%%%%%%%%%%%%%%%%%%%%%%%%

We have proposed an algorithm for computing the canonical rank-$R$ tensor decomposition that applies ALS as a nonlinear preconditioner to the nonlinear conjugate gradient algorithm.  We consider the ALS algorithm as a preconditioner because it is the standard algorithm used to compute the canonical rank-$R$ tensor decomposition but it is known to converge very slowly for certain problems, for which acceleration by NCG is expected to be beneficial.  We have considered several approaches for incorporating the nonlinear preconditioner into the NCG algorithm that have been described in the literature \cite{BartelsDaniel1974,CGO1977,Mittlemann1980,Elad2010,PETSC2013}, corresponding to two different sets of preconditioned formulas for the standard FR, PR and HS update parameter, $\beta$, namely the $\widetilde{\beta}$ and $\widehat{\beta}$ formulas.  If we use the $\widehat{\beta}$ formulas and apply the PNCG algorithm using an SPD preconditioner to a convex quadratic function using an exact line search, then the PNCG algorithm simplifies to the PCG  algorithm.  Also, we proved a new convergence result for one of the PNCG variants under suitable conditions, building on known convergence results for non-preconditioned NCG when line searches are used that satisfy the strong Wolfe conditions.
Note that it is very easy to extend existing NCG software with the nonlinear preconditioning mechanism. Our simulation code and examples can be found at \url{www.math.uwaterloo.ca/~hdesterc/pncg.html}.

Following the methodology of \cite{TomasiBro2006} we create numerous test tensors and perform extensive numerical tests comparing the PNCG algorithm to the ALS and NCG algorithms.  We consider a wide range of tensor sizes, ranks, factor collinearity and noise levels.  Results in \cite{ADK2011} showed that ALS is normally faster than NCG. In this paper, we show that NCG preconditioned with ALS (or, equivalently, ALS accelerated by NCG) is often significantly faster than ALS by itself, for difficult problems. For easy problems, where the collinearity is 0.5, ALS outperforms all other algorithms. However, when the problem becomes more difficult and the collinearity is 0.9, the PNCG algorithm is often the fastest algorithm.  The only case where ALS is faster is when we consider our largest tensor size and highest rank.  The performance profiles of each algorithm also show that for the more difficult problems, PNCG is consistently both more robust and faster than the NCG algorithm. For our optimization problems, we generally obtain convergent results for all of the six variants of the PNCG algorithm we considered. It is interesting that for the PDE problems of \cite{PETSC2013}, out of the $\widetilde{\beta}$ variants, only $\widetilde{\beta}^{PR}$ was found viable. It appears that the $\widehat{\beta}$ variants were not investigated in \cite{PETSC2013}. We did find for our test tensors that the $\widetilde{\beta}^{PR}$ formula, which does not reduce to PCG in the linear case, converges the fastest for most cases.

The PNCG algorithm discussed in this paper is formulated under a general framework. While this approach has met with success previously in certain application areas \cite{BartelsDaniel1974,CGO1977,Mittlemann1980,Elad2010,PETSC2013} and may offer promising avenues for further applications, it appears that the nonlinearly preconditioned NCG approach has received relatively little attention in the broader community and remains underexplored both theoretically and experimentally. It will be interesting to investigate the effectiveness of PNCG for other nonlinear optimization problems.  Other nonlinear least-squares optimization problems for which ALS solvers are available are good initial candidates for further study.  However, as with PCG for SPD linear systems \cite{Saad2003}, it is fully expected that devising effective preconditioners for more general nonlinear optimization problems will be highly problem-dependent while at the same time being crucial for gaining substantial performance benefits.  

\nocite{*}
\bibliographystyle{plain}
\bibliography{PNCGPaper}	

\newpage

%-------------------------------------------------------------------
{\renewcommand{\arraystretch}{3}
\begin{table}[p]
\centering
\caption{Variants of $\overline{\beta}_{k+1}$ for the Nonlinearly Preconditioned Nonlinear Conjugate Gradient Algorithm (PNCG).}
\label{Table:BetaFormulas}
\begin{tabular}{|c|c|c|}
\hline
 & $\widetilde{\beta}_{k+1}$ & $\widehat{\beta}_{k+1}$ \\
\hline\hline
	 Fletcher-Reeves & \raise.9ex\hbox{$\displaystyle \widetilde{\beta}_{k+1}^{FR} = \frac{\overline{\bfg}_{k+1}^T\overline{\bfg}_{k+1}}{\overline{\bfg}_{k}^T\overline{\bfg}_{k}}$} &  \raise.9ex\hbox{$\displaystyle \widehat{\beta}_{k+1}^{FR} = \frac{{\bfg}_{k+1}^T\overline{\bfg}_{k+1}}{{\bfg}_{k}^T\overline{\bfg}_{k}}$} \\
\hline
 Polak-Ribi\`{e}re &  \raise.9ex\hbox{$\displaystyle \widetilde{\beta}_{k+1}^{PR} = \frac{\overline{\bfg}_{k+1}^T(\overline{\bfg}_{k+1}-\overline{\bfg}_k)}{\overline{\bfg}_{k}^T\overline{\bfg}_{k}}$} &  \raise.9ex\hbox{$\displaystyle \widehat{\beta}_{k+1}^{PR} = \frac{{\bfg}_{k+1}^T(\overline{\bfg}_{k+1}-\overline{\bfg}_k)}{{\bfg}_{k}^T\overline{\bfg}_{k}}$}  \\
\hline
Hestenes-Stiefel &  \raise.9ex\hbox{$\displaystyle \widetilde{\beta}_{k+1}^{HS} = \frac{\overline{\bfg}_{k+1}^T(\overline{\bfg}_{k+1}-\overline{\bfg}_k)}{(\overline{\bfg}_{k+1}-\overline{\bfg}_k)^T\bfp_k}$} &  \raise.9ex\hbox{$\displaystyle \widehat{\beta}_{k+1}^{HS} = \frac{{\bfg}_{k+1}^T(\overline{\bfg}_{k+1}-\overline{\bfg}_k)}{({\bfg}_{k+1}-{\bfg}_k)^T\bfp_k}$} \\
\hline
\end{tabular}
\end{table}}
%-------------------------------------------------------------------

\clearpage
{\renewcommand{\arraystretch}{1.45}
\begin{table}[p]  
\centering
\caption{Speed Comparison with $R = 3$ and $I=20$.}
\label{Table:SummarizeR3}
\begin{tabular}{| l | c | c |}
\hline
\multirow{2}{*}{Optimization Method} & \multicolumn{2}{|c|}{Mean Time (Seconds)} \\
\cline{2-3}
& $C = 0.5$ & $C = 0.9$\\
\hline
\hline
ALS& \textbf{0.1644} $\pm$  \textbf{0.0185} \textbf{(180)} \textbf{(180)}& 5.3182 $\pm$  1.1356 \textbf{(180)} \textbf{(99)}\\
\hline
NCG - $\beta^{FR}$& 0.8617 $\pm$  0.6658 \textbf{(178)} \textbf{(180)}& 4.1649 $\pm$  3.8092 \textbf{(172)} \textbf{(98)}\\
\hline
PNCG - $\widetilde{\beta}^{FR}$& 1.1707 $\pm$  0.5962 \textbf{(180)} \textbf{(180)}& 1.7556 $\pm$  0.5792 \textbf{(180)} \textbf{(99)}\\
\hline
PNCG - $\widehat{\beta}^{FR}$& 1.5308 $\pm$  0.7196 \textbf{(180)} \textbf{(180)}& 2.1131 $\pm$  1.3339 \textbf{(180)} \textbf{(99)}\\
\hline
NCG - $\beta^{PR}$& 0.5170 $\pm$  0.5300 \textbf{(179)} \textbf{(180)}& 3.5328 $\pm$  2.7377 \textbf{(167)} \textbf{(97)}\\
\hline
PNCG - $\widetilde{\beta}^{PR}$& 0.3434 $\pm$  0.2611 \textbf{(180)} \textbf{(180)}& \textbf{0.9676} $\pm$  \textbf{0.2020} \textbf{(180)} \textbf{(99)}\\
\hline
PNCG - $\widehat{\beta}^{PR}$& 0.4087 $\pm$  0.2592 \textbf{(180)} \textbf{(180)}& 0.9979 $\pm$  0.3077 \textbf{(180)} \textbf{(99)}\\
\hline
NCG - $\beta^{HS}$& 0.4457 $\pm$  0.3458 \textbf{(178)} \textbf{(180)}& 3.1265 $\pm$  2.0725 \textbf{(167)} \textbf{(98)}\\
\hline
PNCG - $\widetilde{\beta}^{HS}$& 0.4969 $\pm$  0.4234 \textbf{(180)} \textbf{(180)}& 3.3269 $\pm$  3.6214 \textbf{(180)} \textbf{(99)}\\
\hline
PNCG - $\widehat{\beta}^{HS}$& 0.4675 $\pm$  0.3095 \textbf{(180)} \textbf{(180)}& 1.0267 $\pm$  0.4513 \textbf{(180)} \textbf{(99)}\\
\hline
\end{tabular}
\end{table}
}

\clearpage
{\renewcommand{\arraystretch}{1.45}
\begin{table}[p]  
\centering
\caption{Speed Comparison with $R = 5$ and $I=20$.}
\label{Table:SummarizeR5}
\begin{tabular}{| l | c | c |}
\hline
\multirow{2}{*}{Optimization Method} & \multicolumn{2}{|c|}{Mean Time (Seconds)} \\
\cline{2-3}
& $C = 0.5$ & $C = 0.9$\\
\hline
\hline
ALS& \textbf{0.2517} $\pm$  \textbf{0.0663} \textbf{(180)} \textbf{(180)} & 13.8499 $\pm$  5.8256 \textbf{(106)} \textbf{(20)}\\
\hline
NCG - $\beta^{FR}$& 0.9723 $\pm$  0.3944 \textbf{(180)} \textbf{(180)} & 9.5120 $\pm$  6.7666 \textbf{(94)} \textbf{(20)}\\
\hline
PNCG - $\widetilde{\beta}^{FR}$& 2.4235 $\pm$  1.0916 \textbf{(180)} \textbf{(180)} & 4.4674 $\pm$  1.6256 \textbf{(104)} \textbf{(20)}\\
\hline
PNCG - $\widehat{\beta}^{FR}$& 3.0196 $\pm$  1.8507 \textbf{(179)} \textbf{(180)} & 7.1644 $\pm$  5.4327 \textbf{(106)} \textbf{(20)}\\
\hline
NCG - $\beta^{PR}$& 0.5730 $\pm$  0.3607 \textbf{(180)} \textbf{(180)} & 7.0099 $\pm$  4.3507 \textbf{(83)} \textbf{(20)}\\
\hline
PNCG - $\widetilde{\beta}^{PR}$& 1.7628 $\pm$ 12.6569 \textbf{(180)} \textbf{(180)} & \textbf{2.7751} $\pm$  \textbf{1.9319} \textbf{(109)} \textbf{(20)}\\
\hline
PNCG - $\widehat{\beta}^{PR}$& 1.2049 $\pm$  2.0744 \textbf{(180)} \textbf{(180)} & 4.1549 $\pm$  5.0031 \textbf{(108)} \textbf{(20)}\\
\hline
NCG - $\beta^{HS}$& 0.5285 $\pm$  0.3131 \textbf{(180)} \textbf{(180)} & 6.9515 $\pm$  4.6067 \textbf{(85)} \textbf{(20)}\\
\hline
PNCG - $\widetilde{\beta}^{HS}$& 0.9940 $\pm$  1.1962 \textbf{(180)} \textbf{(180)} & 5.1334 $\pm$  5.6721 \textbf{(107)} \textbf{(20)}\\
\hline
PNCG - $\widehat{\beta}^{HS}$& 1.4841 $\pm$  3.4182 \textbf{(180)} \textbf{(180)} & 5.5534 $\pm$ 12.4827 \textbf{(108)} \textbf{(20)}\\
\hline
\end{tabular}
\end{table}
}

\clearpage
{\renewcommand{\arraystretch}{1.45}
\begin{table}[p]   
\centering
\caption{Speed Comparison with $I = 50$.}
\label{Table:SummarizeI50}
\begin{tabular}{| c | l | c | c |}
\hline
& Optimization & \multicolumn{2}{|c|}{Mean Time (Seconds)} \\
\cline{3-4}
&\multicolumn{1}{|c|}{Method}& $C = 0.5$ & $C = 0.9$\\
\hline
\hline
\multirow{4}{*}{$R = 3$}&ALS& \textbf{0.1988} $\pm$  \textbf{0.0368} \textbf{(180)} \textbf{(180)} & 5.1981 $\pm$  0.3444 \textbf{(180)} \textbf{(180)}\\
\cline{2-4}
&NCG - $\beta^{PR}$& 0.7170 $\pm$  0.2830 \textbf{(180)} \textbf{(180)} & 4.4516 $\pm$  1.9664 \textbf{(179)} \textbf{(171)}\\
\cline{2-4}
&PNCG - $\widetilde{\beta}^{PR}$& 0.8335 $\pm$  0.9137 \textbf{(180)} \textbf{(180)} & \textbf{1.6320} $\pm$  \textbf{1.1064} \textbf{(180)} \textbf{(180)}\\
\cline{2-4}
&PNCG - $\widehat{\beta}^{PR}$& 1.1722 $\pm$  1.4899 \textbf{(180)} \textbf{(180)} & 1.6676 $\pm$  0.7855 \textbf{(180)} \textbf{(180)}\\
\hline
\hline
\multirow{4}{*}{$R = 5$}&ALS& \textbf{0.3357} $\pm$  \textbf{0.1509} \textbf{(180)} \textbf{(180)} &10.4698 $\pm$  3.0988 \textbf{(159)} \textbf{(120)}\\
\cline{2-4}
&NCG - $\beta^{PR}$& 1.6522 $\pm$  1.2236 \textbf{(180)} \textbf{(180)} &14.6827 $\pm$ 10.1787 \textbf{(142)} \textbf{(116)}\\
\cline{2-4}
&PNCG - $\widetilde{\beta}^{PR}$& 3.8331 $\pm$ 13.5605 \textbf{(179)} \textbf{(179)} & \textbf{7.4386} $\pm$ \textbf{12.2583} \textbf{(155)} \textbf{(120)}\\
\cline{2-4}
&PNCG - $\widehat{\beta}^{PR}$& 6.1021 $\pm$ 26.0100 \textbf{(179)} \textbf{(180)} &10.4150 $\pm$ 25.0737 \textbf{(156)} \textbf{(120)}\\
\hline
\end{tabular}
\end{table}
}

\clearpage
{\renewcommand{\arraystretch}{1.45}
\begin{table}[p]  
\centering
\caption{Speed Comparison with $I = 100$.}
\label{Table:SummarizeI100}
\begin{tabular}{| c | | l | c | c |}
\hline
& Optimization & \multicolumn{2}{|c|}{Mean Time (Seconds)} \\
\cline{3-4}
&\multicolumn{1}{|c|}{Method}& $C = 0.5$ & $C = 0.9$\\
\hline
\hline
\multirow{4}{*}{$R = 3$}&ALS& \textbf{1.9006} $\pm$  \textbf{0.7043} \textbf{(180)} \textbf{(180)} &47.3505 $\pm$  4.3030 \textbf{(180)} \textbf{(180)}\\
\cline{2-4}
&NCG - $\beta^{PR}$&14.3840 $\pm$  6.1019 \textbf{(180)} \textbf{(180)} & 94.9786 $\pm$ 89.6489 \textbf{(180)} \textbf{(180)}\\
\cline{2-4}
&PNCG - $\widetilde{\beta}^{PR}$&15.3848 $\pm$ 24.8887 \textbf{(180)} \textbf{(180)} &\textbf{28.2346 }$\pm$ \textbf{30.9428} \textbf{(180)} \textbf{(180)}\\
\cline{2-4}
&PNCG - $\widehat{\beta}^{PR}$&20.8161 $\pm$ 31.6531 \textbf{(180)} \textbf{(180)} &34.8675 $\pm$ 46.9708 \textbf{(180)} \textbf{(180)}\\
\hline
\hline
\multirow{4}{*}{$R = 5$}&ALS& \textbf{1.9770} $\pm$ \textbf{ 0.4002} \textbf{(180)} \textbf{(180)} &\textbf{57.1086} $\pm$  \textbf{5.5332} \textbf{(180)} \textbf{(179)}\\
\cline{2-4}
&NCG - $\beta^{PR}$&14.8031 $\pm$  6.2776 \textbf{(180)} \textbf{(180)} &124.5449 $\pm$ 95.9350 \textbf{(178)} \textbf{(138)}\\
\cline{2-4}
&PNCG - $\widetilde{\beta}^{PR}$&44.2358 $\pm$ 205.5225 \textbf{(180)} \textbf{(179)} &103.7680 $\pm$ 257.0952 \textbf{(178)} \textbf{(178)}\\
\cline{2-4}
&PNCG - $\widehat{\beta}^{PR}$&66.7177 $\pm$ 157.0857 \textbf{(180)} \textbf{(180)} &151.7887 $\pm$ 356.2924 \textbf{(180)} \textbf{(179)}\\
\hline
\end{tabular}
\end{table}

\clearpage

\begin{figure}[p]
\captionsetup{justification=centering}
\centering
%%----start of first subfigure----
\subfloat[$R = 3$, Collinearity$ = 0.5$]{\label{fig:FR_20_3_5}
%% label for first subfigure
\includegraphics[width=0.4\linewidth]{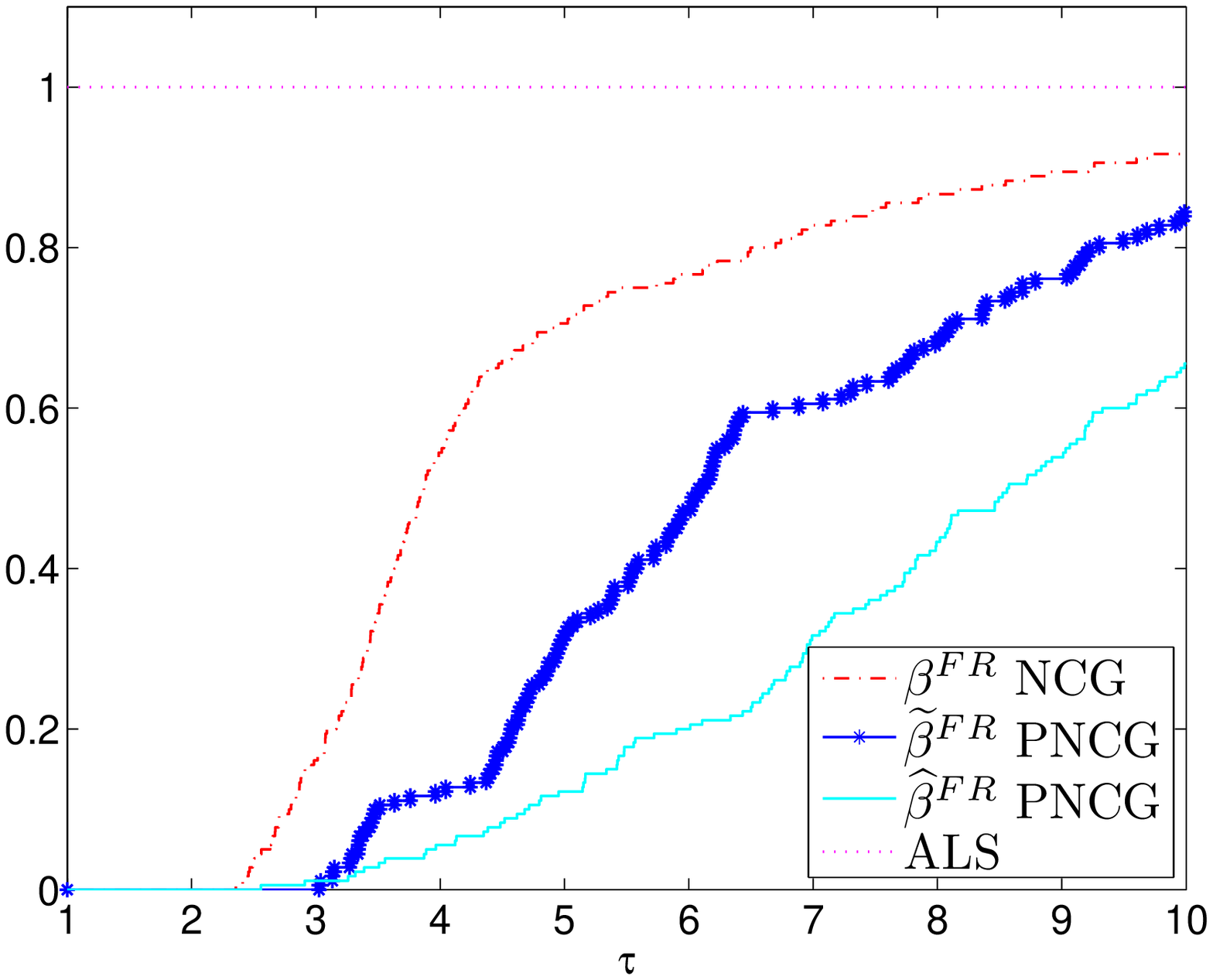}}
\hspace{0.1\linewidth}
%%----start of second subfigure----
\subfloat[$R = 3$, Collinearity$ = 0.9$]{\label{fig:FR_20_3_9}
%% label for second subfigure
\includegraphics[width=0.4\linewidth]{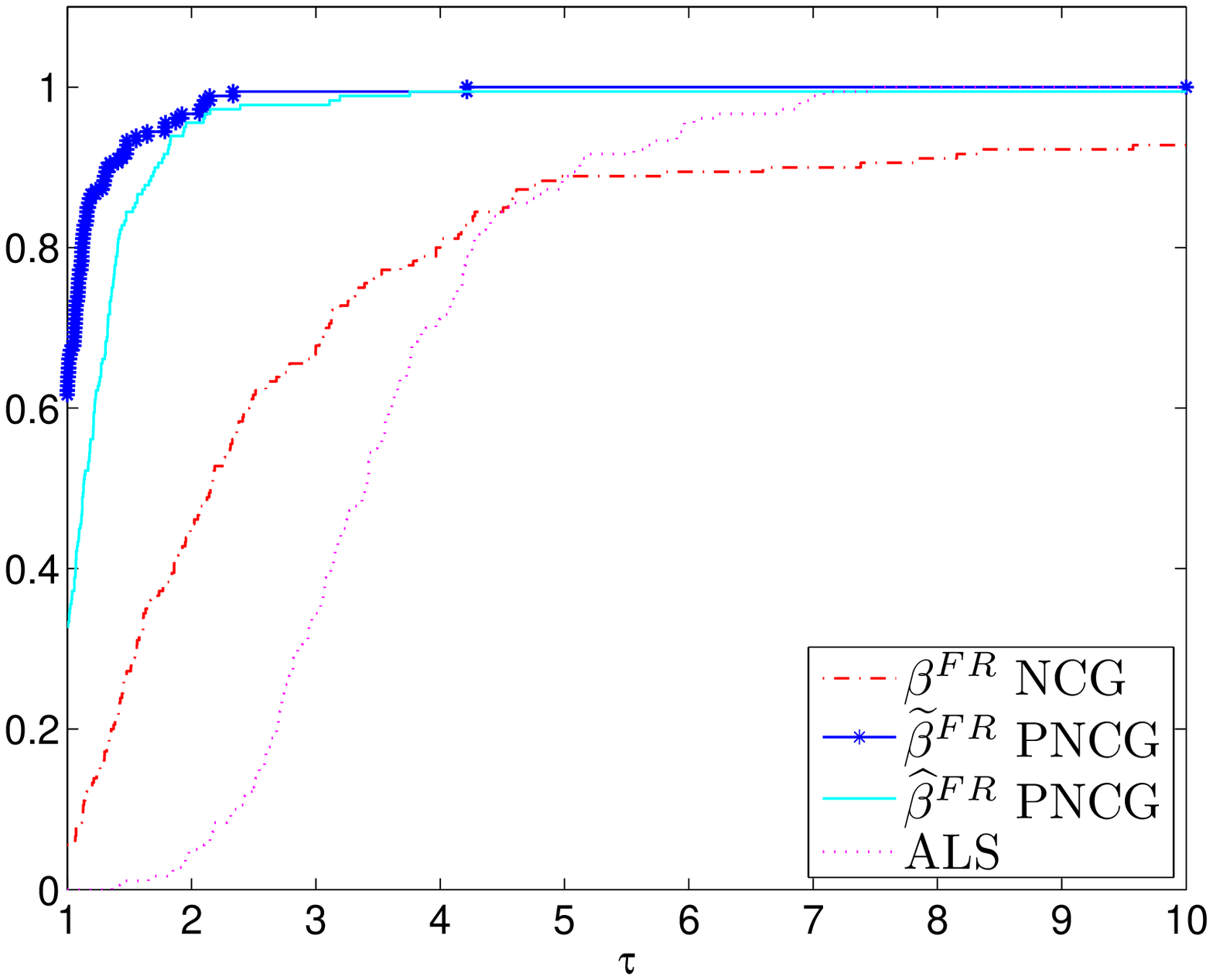}}
\\[20pt]
%%----start of third subfigure----
\subfloat[$R = 5$, Collinearity$ = 0.5$]{\label{fig:FR_20_5_5}
%% label for third subfigure
\includegraphics[width=0.4\linewidth]{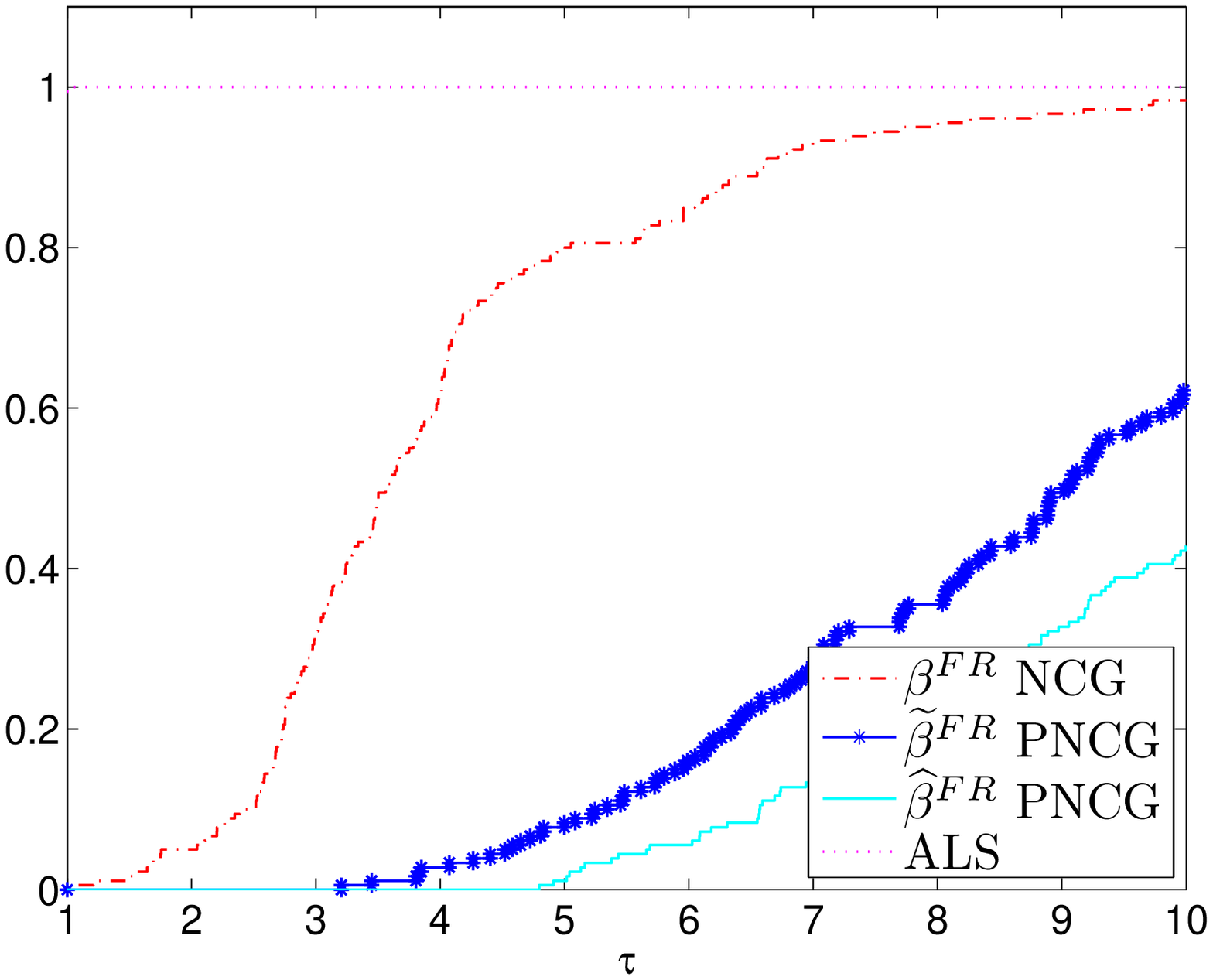}}
\hspace{0.1\linewidth}
%%----start of fourth subfigure----
\subfloat[$R = 5$, Collinearity$ = 0.9$]{\label{fig:FR_20_5_9}
%% label for fourth subfigure
\includegraphics[width=0.4\linewidth]{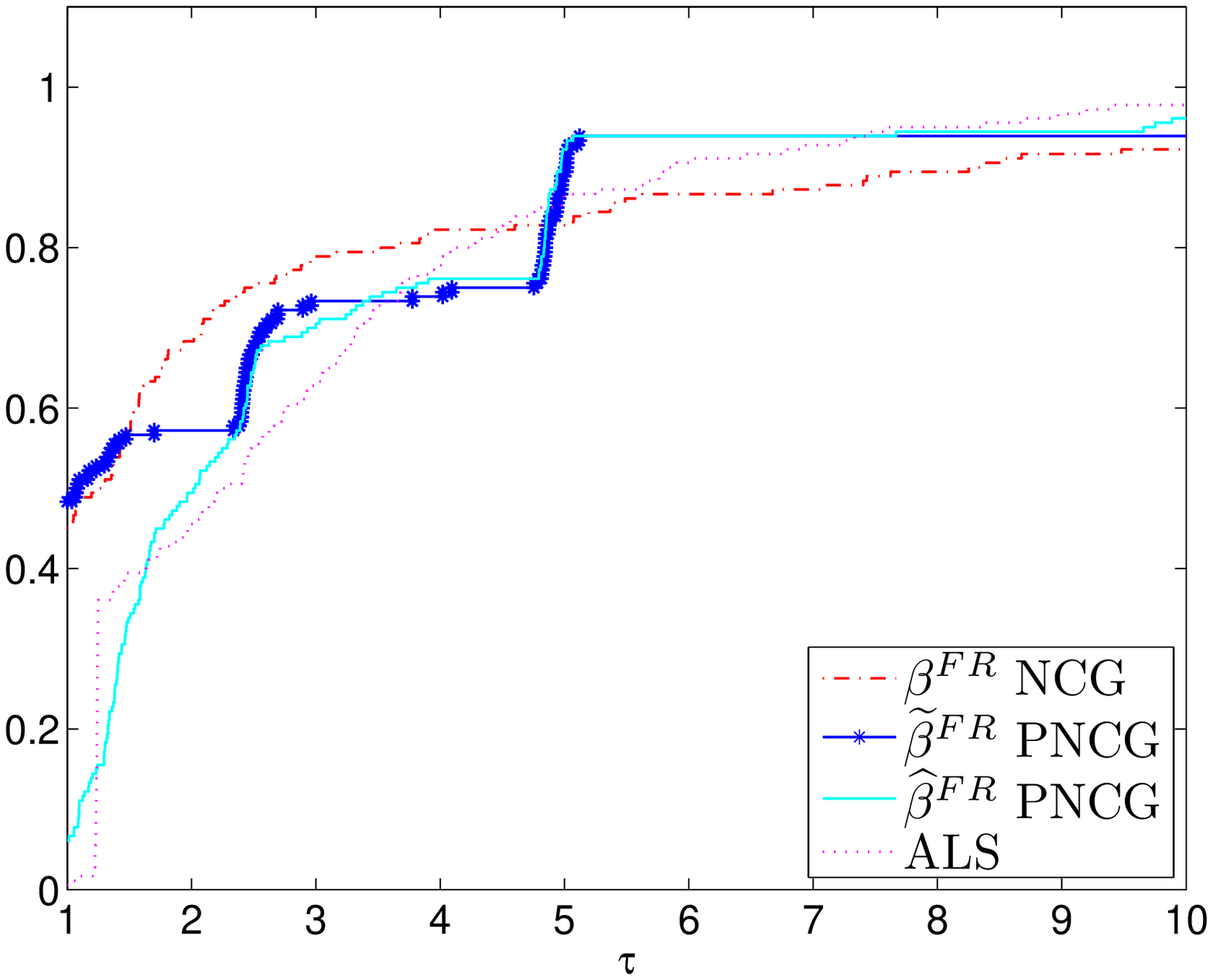}}
\caption{Performance profiles for the algorithms in $\mathscr{S}_1$ with $I = 20$.}
\label{fig:FR_20}
\end{figure}

\begin{figure}[p]
\captionsetup{justification=centering}
\centering
%%----start of first subfigure----
\subfloat[$R = 3$, Collinearity$ = 0.5$]{\label{fig:PR_20_3_5}
%% label for first subfigure
\includegraphics[width=0.4\linewidth]{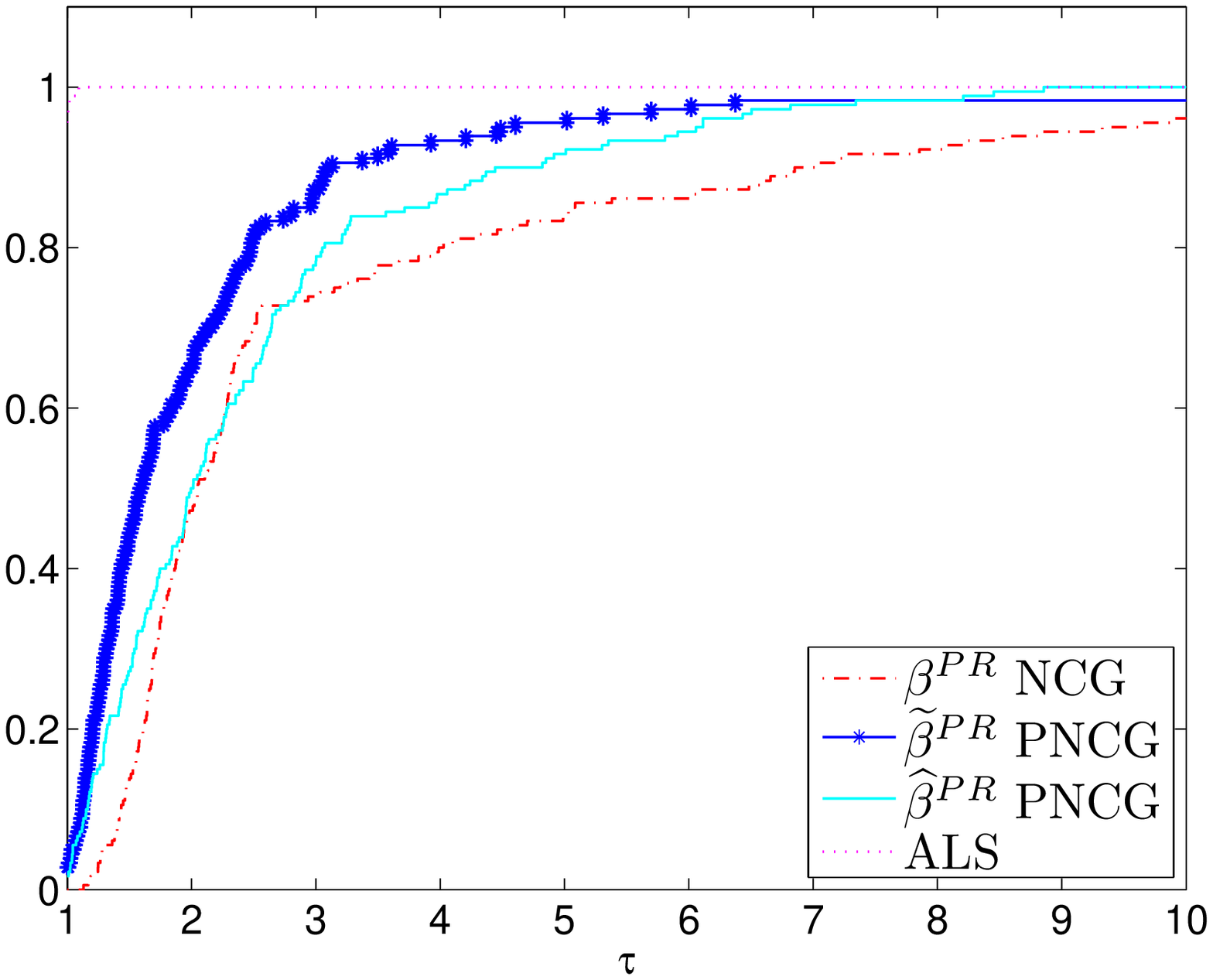}}
\hspace{0.1\linewidth}
%%----start of second subfigure----
\subfloat[$R = 3$, Collinearity$ = 0.9$]{\label{fig:PR_20_3_9}
%% label for second subfigure
\includegraphics[width=0.4\linewidth]{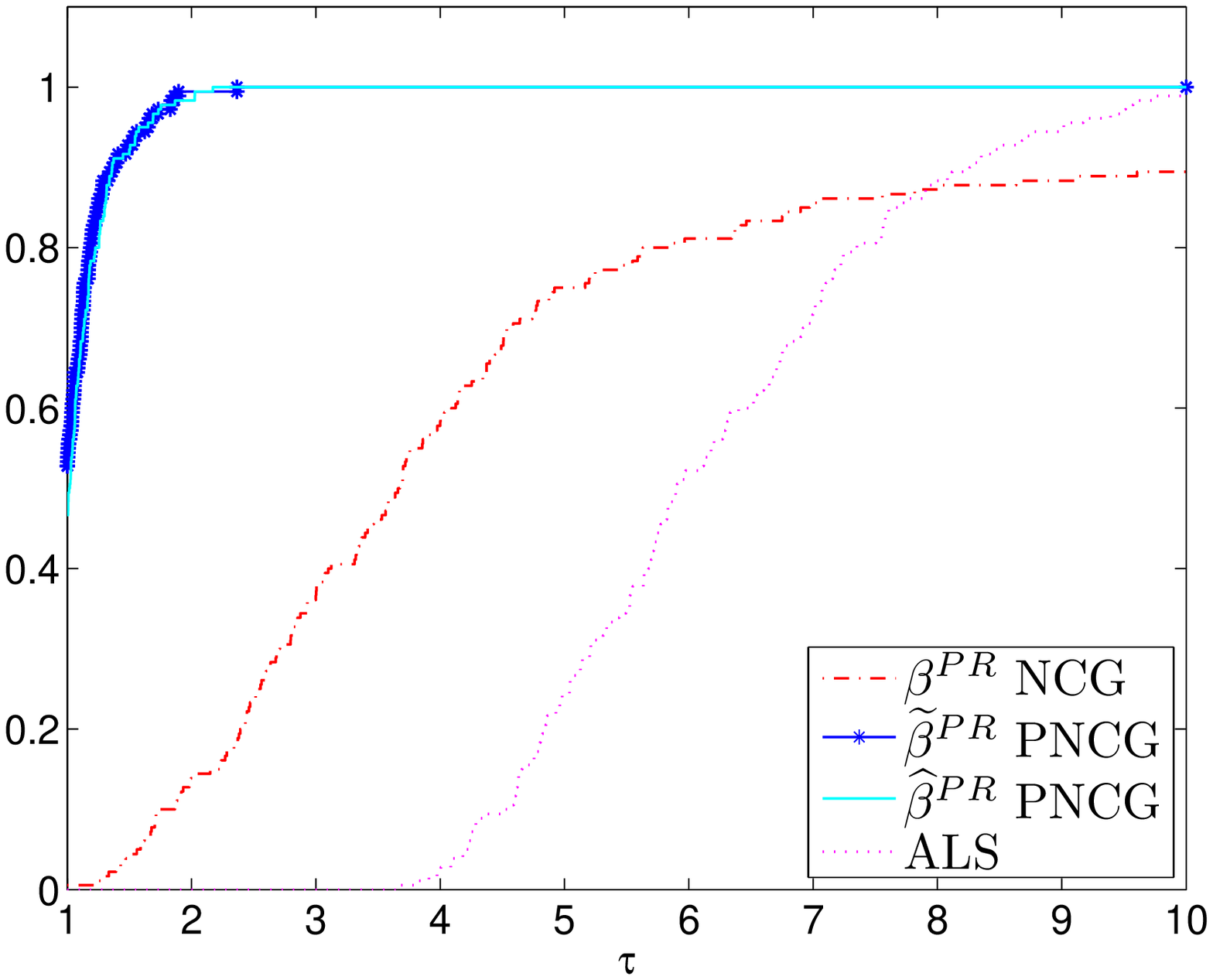}}
\\[20pt]
%%----start of third subfigure----
\subfloat[$R = 5$, Collinearity$ = 0.5$]{\label{fig:PR_20_5_5}
%% label for third subfigure
\includegraphics[width=0.4\linewidth]{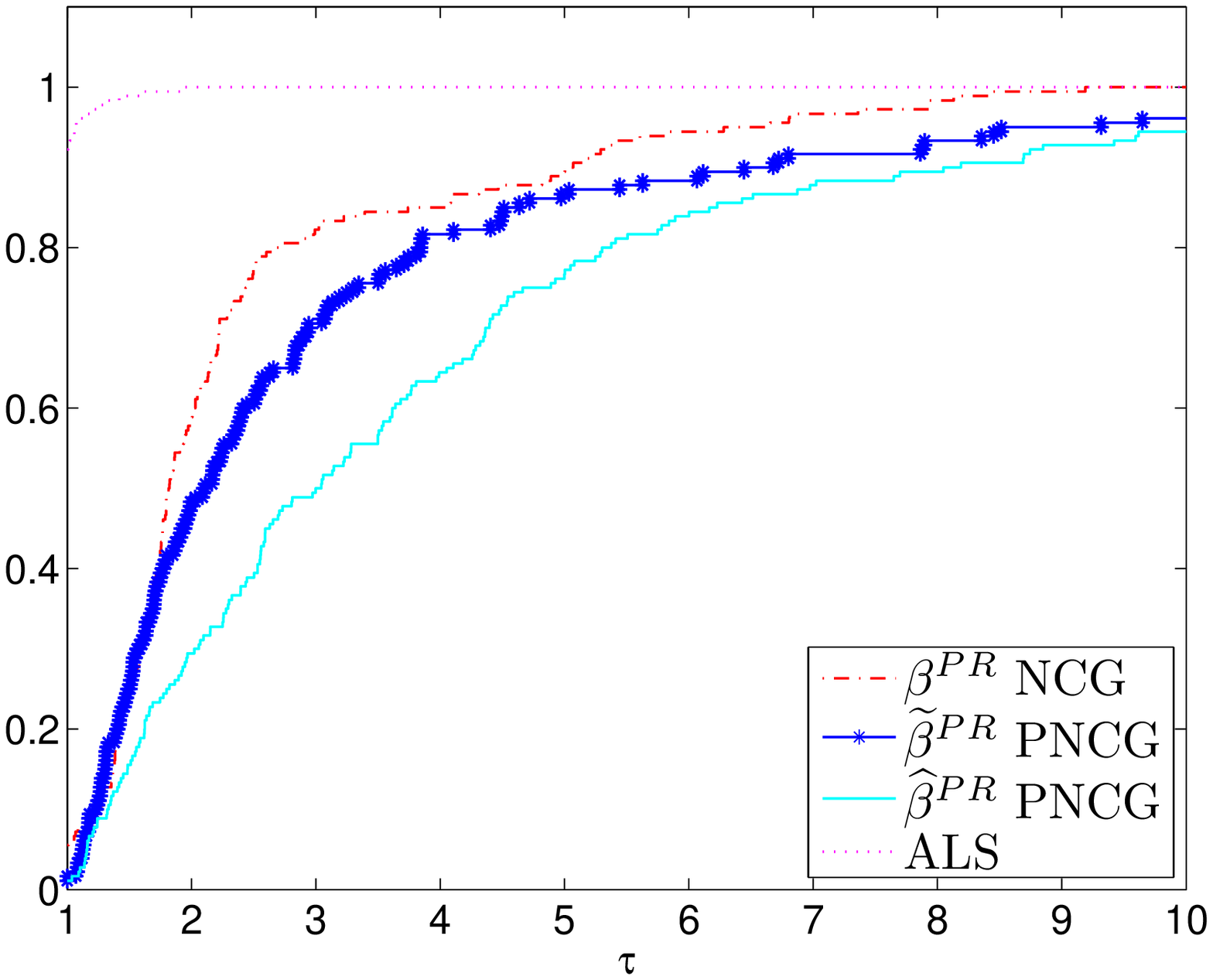}}
\hspace{0.1\linewidth}
%%----start of fourth subfigure----
\subfloat[$R = 5$, Collinearity$ = 0.9$]{\label{fig:PR_20_5_9}
%% label for fourth subfigure
\includegraphics[width=0.4\linewidth]{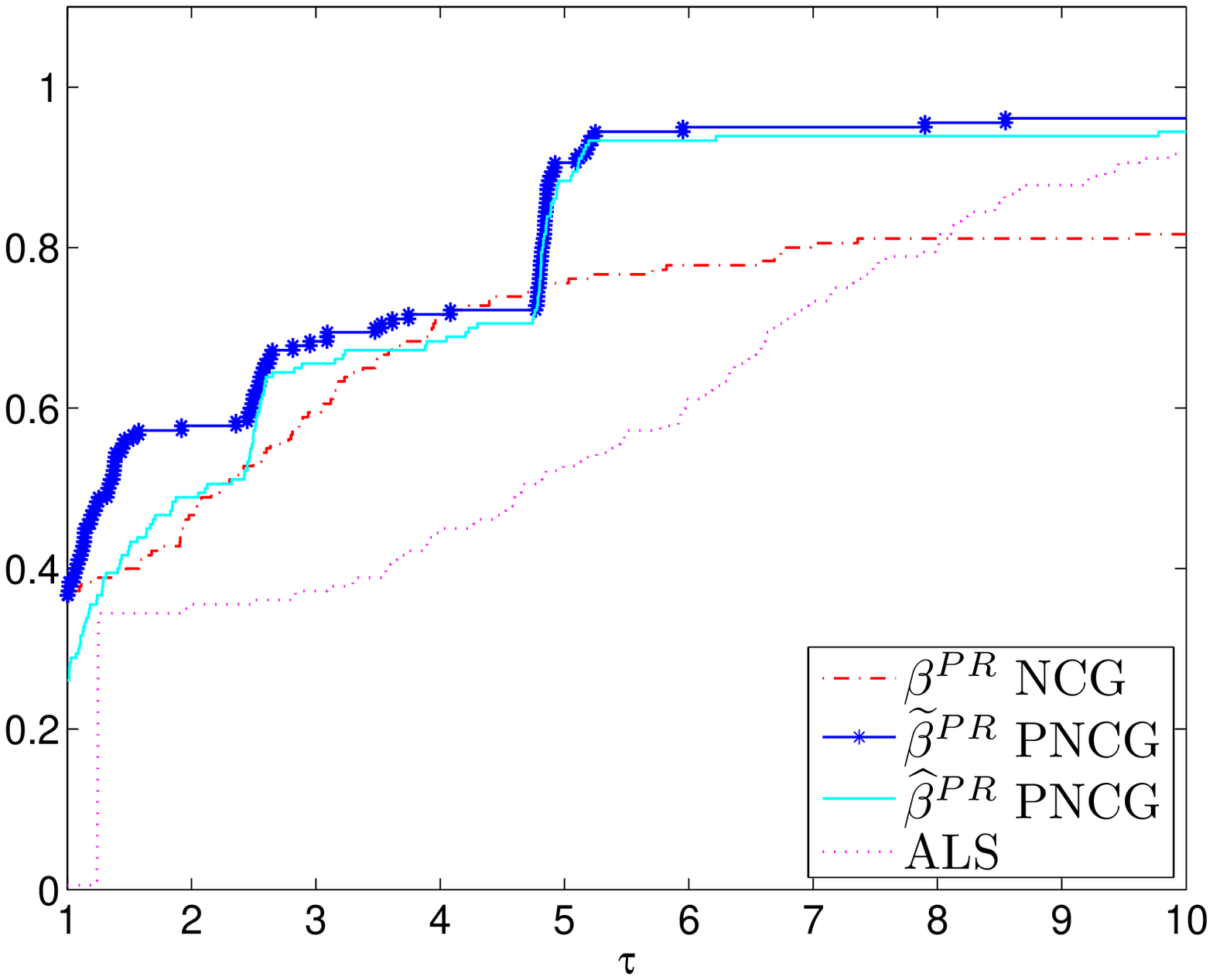}}
\caption{Performance profiles for the algorithms in $\mathscr{S}_2$ with $I = 20$.}
\label{fig:PR_20}
\end{figure}

\begin{figure}[p]
\captionsetup{justification=centering}
\centering
%%----start of first subfigure----
\subfloat[$R = 3$, Collinearity$ = 0.5$]{\label{fig:HS_20_3_5}
%% label for first subfigure
\includegraphics[width=0.4\linewidth]{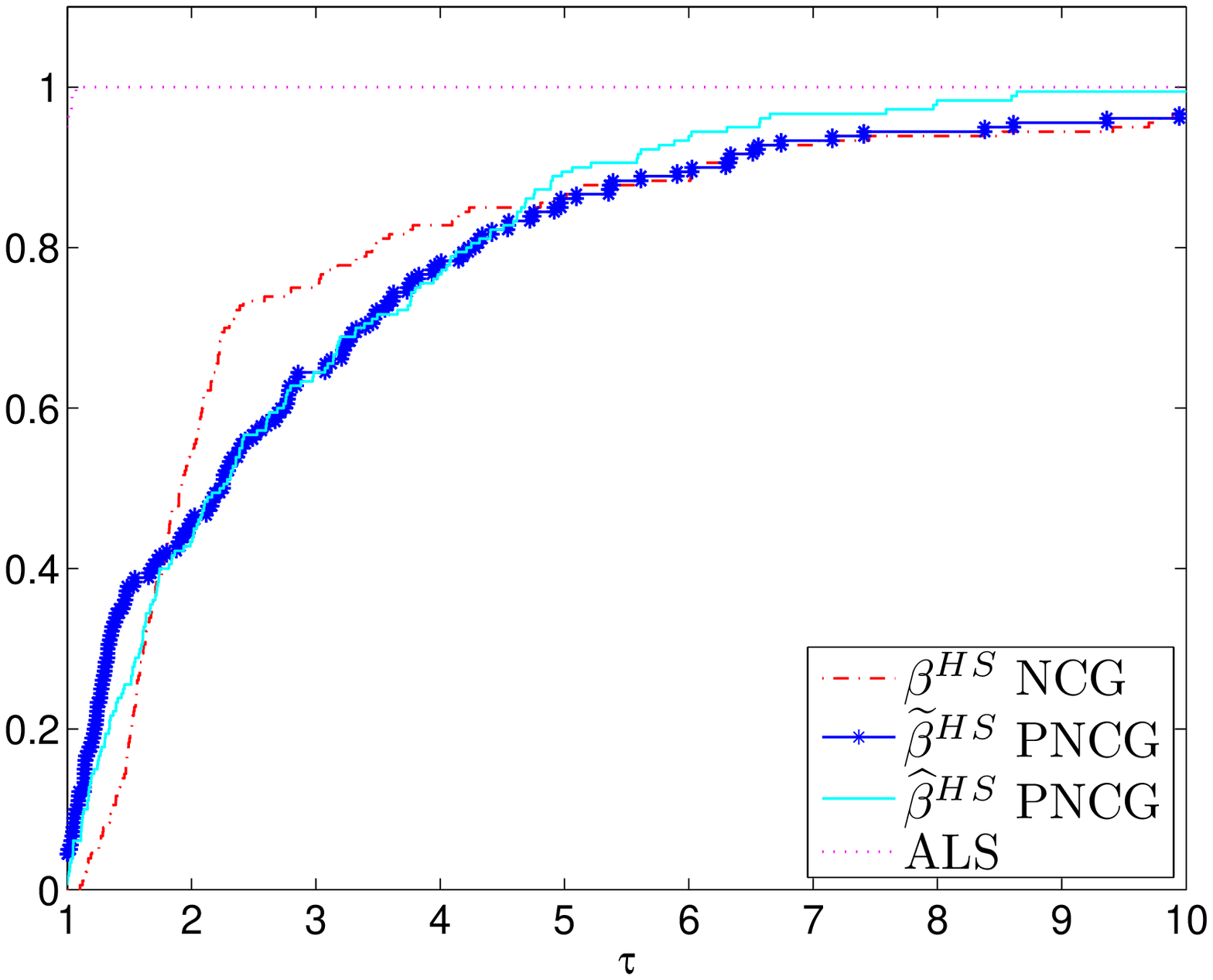}}
\hspace{0.1\linewidth}
%%----start of second subfigure----
\subfloat[$R = 3$, Collinearity$ = 0.9$]{\label{fig:HS_20_3_9}
%% label for second subfigure
\includegraphics[width=0.4\linewidth]{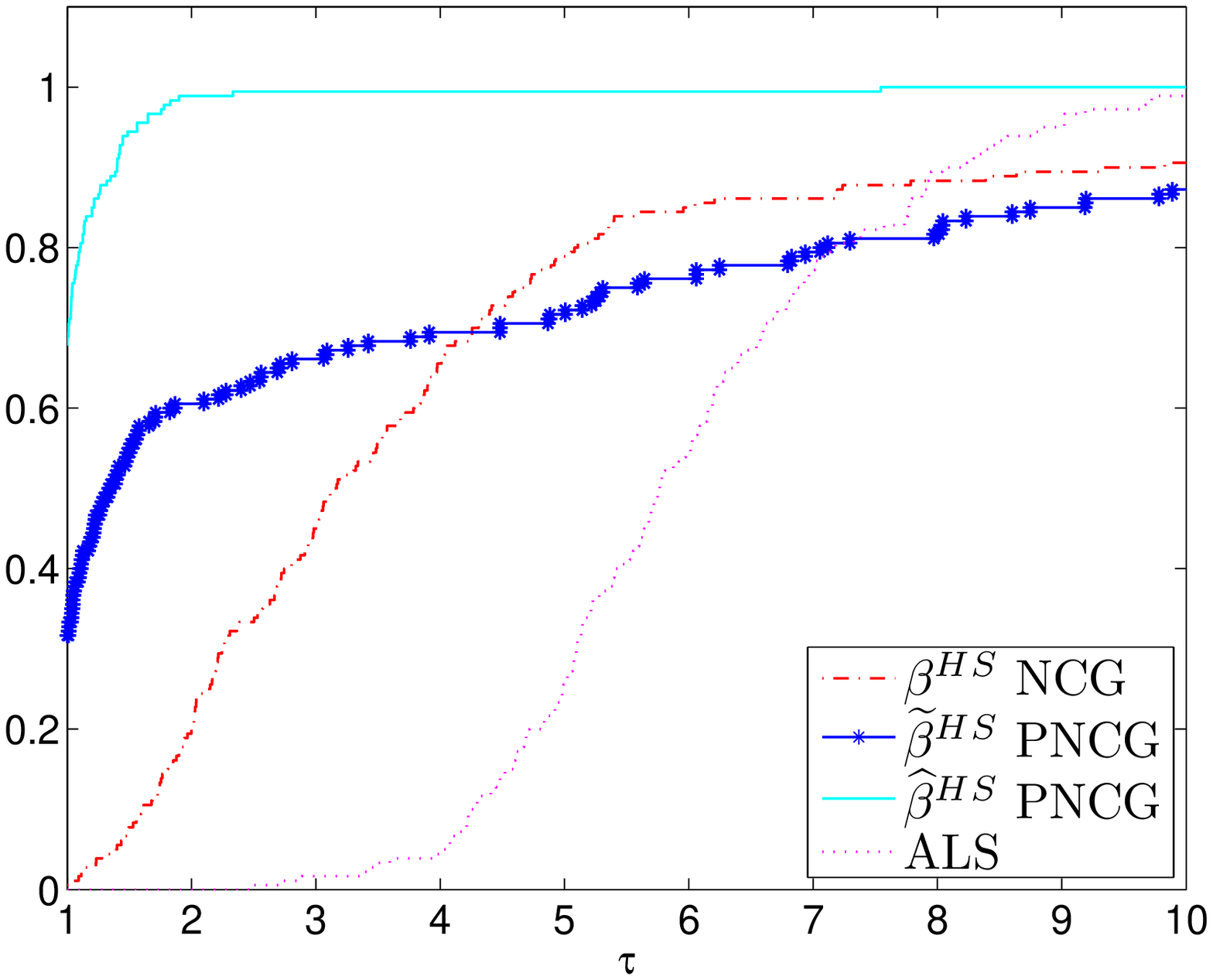}}
\\[20pt]
%%----start of third subfigure----
\subfloat[$R = 5$, Collinearity$ = 0.5$]{\label{fig:HS_20_5_5}
%% label for third subfigure
\includegraphics[width=0.4\linewidth]{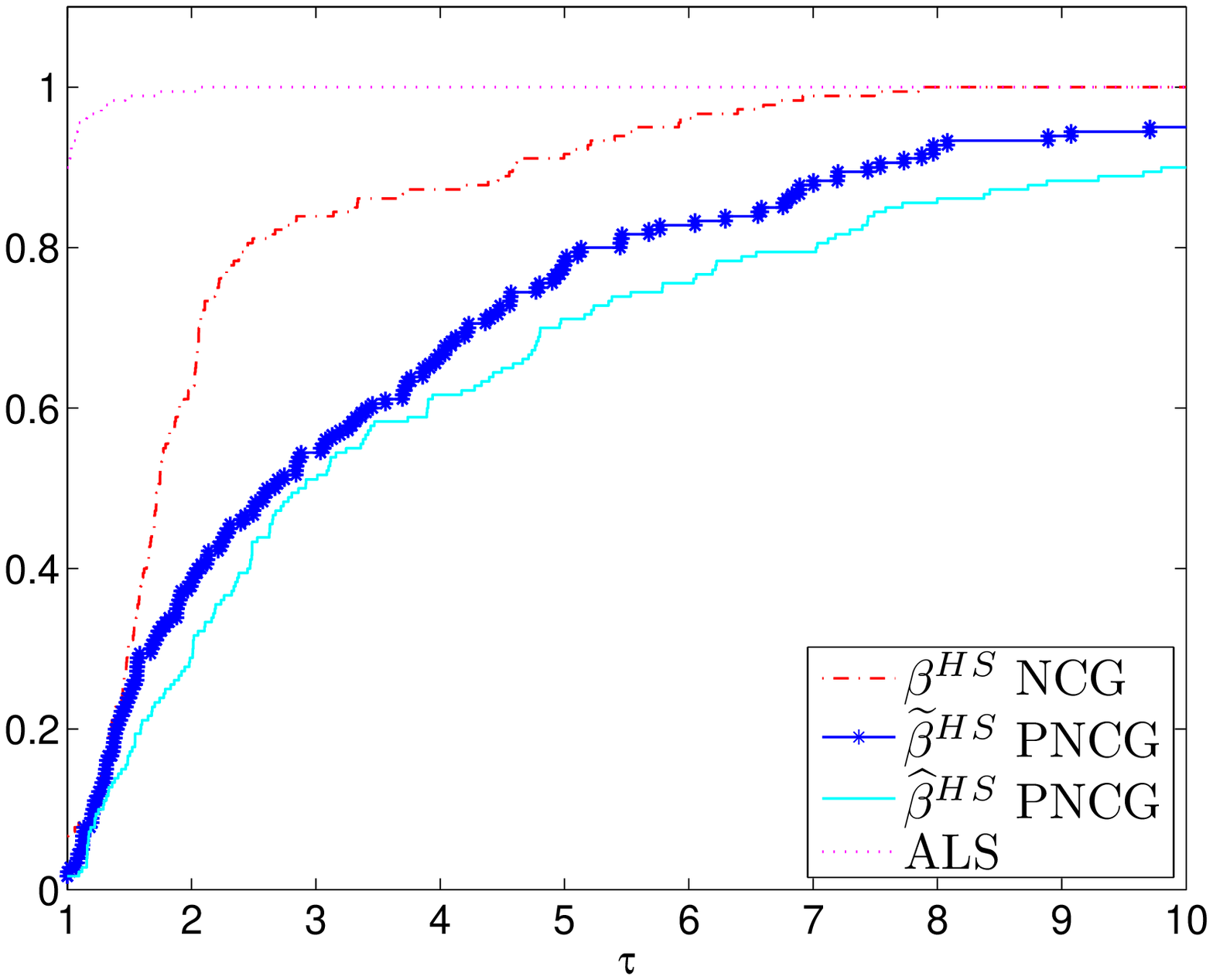}}
\hspace{0.1\linewidth}
%%----start of fourth subfigure----
\subfloat[$R = 5$, Collinearity$ = 0.9$]{\label{fig:HS_20_5_9}
%% label for fourth subfigure
\includegraphics[width=0.4\linewidth]{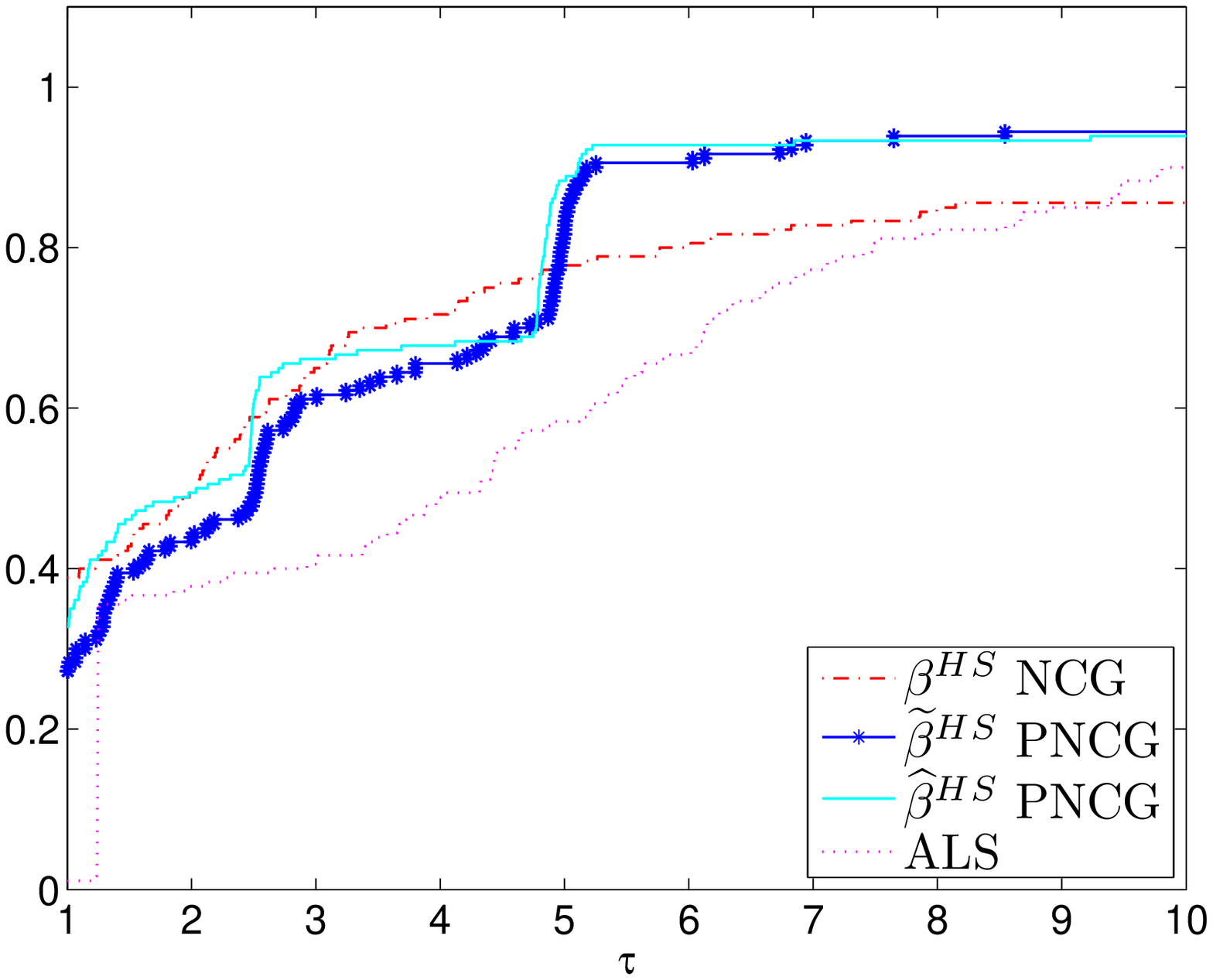}}
\caption{Performance profiles for the algorithms in $\mathscr{S}_3$ with $I = 20$.}
\label{fig:HS_20}
\end{figure}

\begin{figure}[p]
\captionsetup{justification=centering}
\centering
%%----start of first subfigure----
\subfloat[$R = 3$, Collinearity$ = 0.5$]{\label{fig:PR_50_3_5}
%% label for first subfigure
\includegraphics[width=0.4\linewidth]{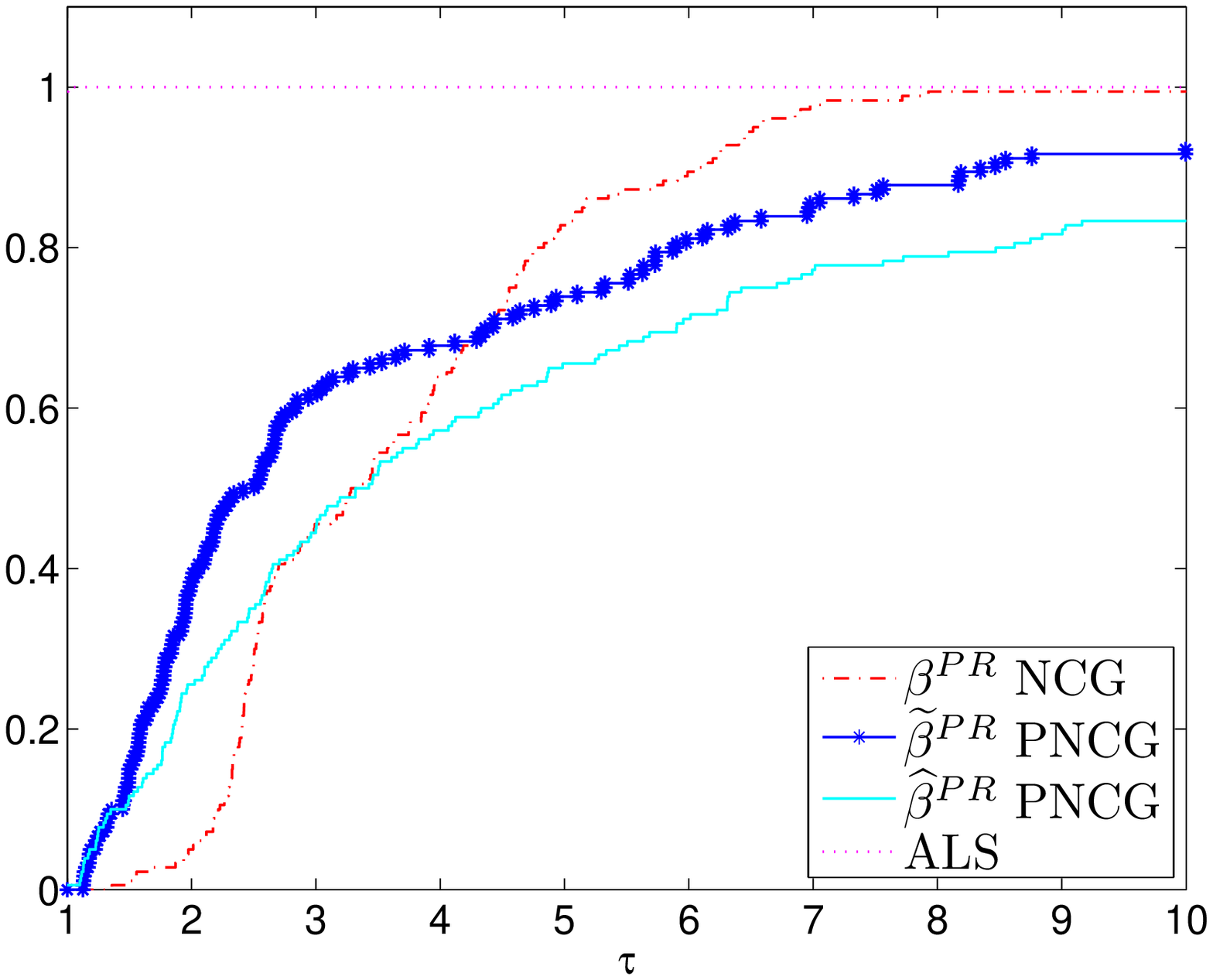}}
\hspace{0.1\linewidth}
%%----start of second subfigure----
\subfloat[$R = 3$, Collinearity$ = 0.9$]{\label{fig:PR_50_3_9}
%% label for second subfigure
\includegraphics[width=0.4\linewidth]{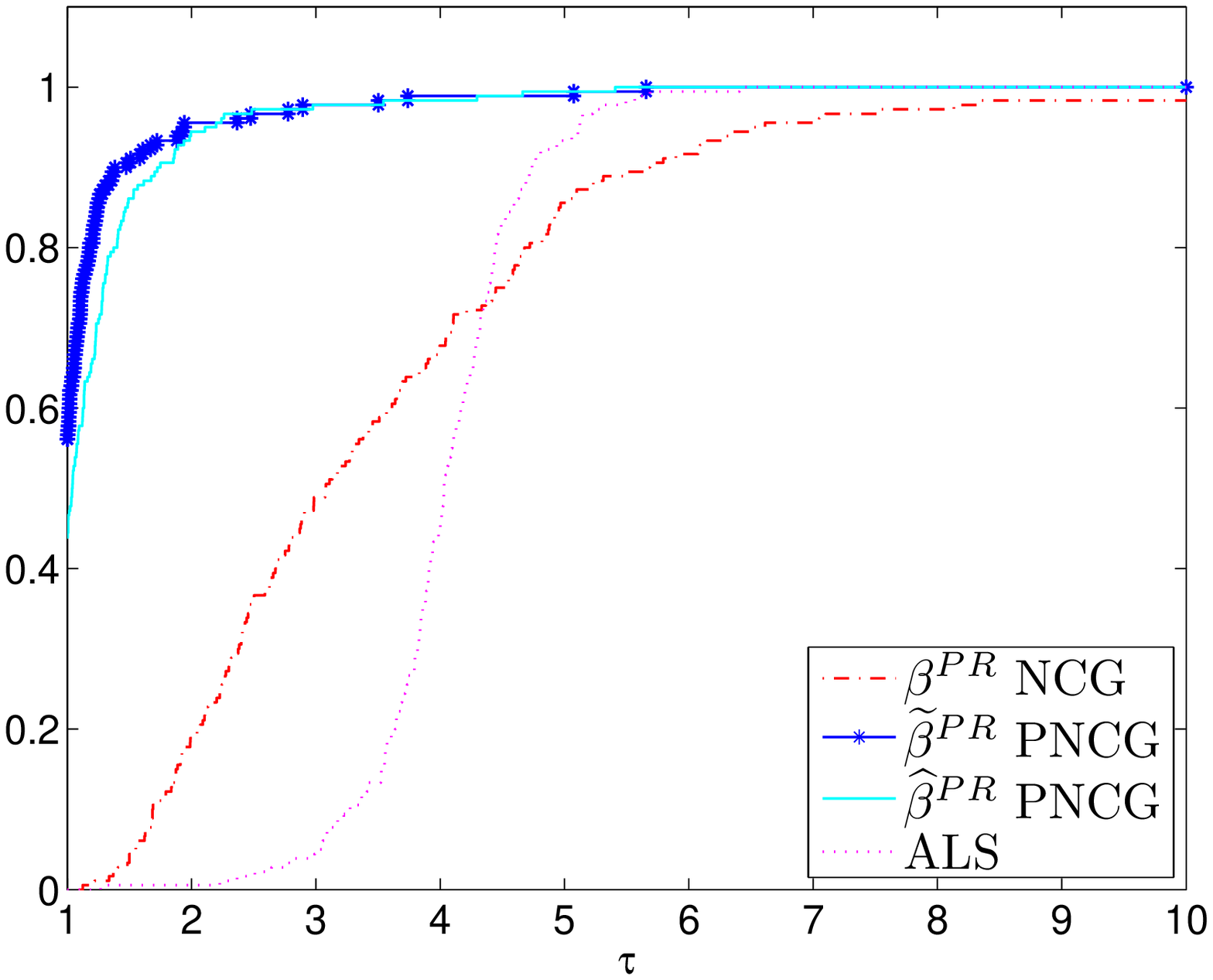}}
\\[20pt]
%%----start of third subfigure----
\subfloat[$R = 5$, Collinearity$ = 0.5$]{\label{fig:PR_50_5_5}
%% label for third subfigure
\includegraphics[width=0.4\linewidth]{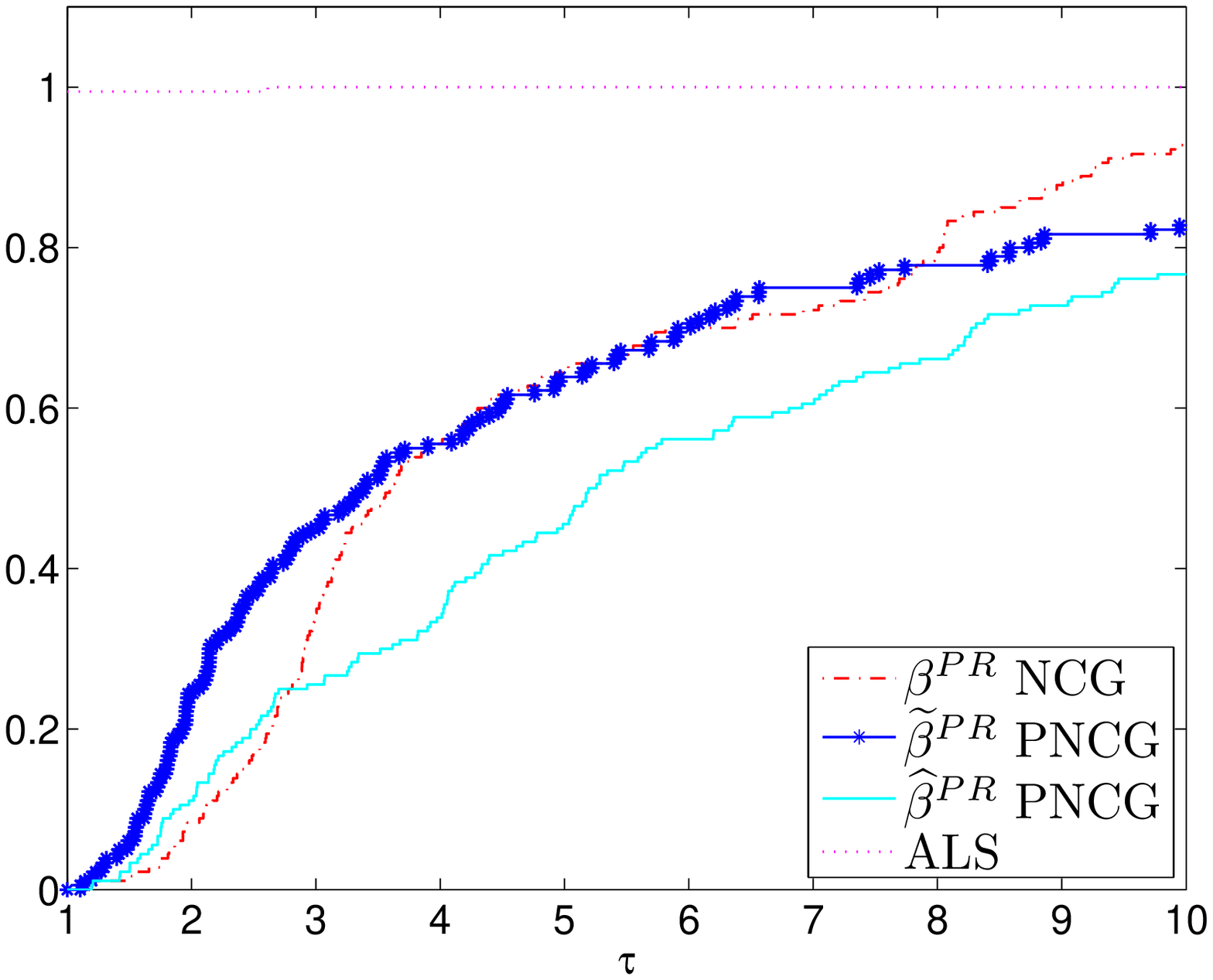}}
\hspace{0.1\linewidth}
%%----start of fourth subfigure----
\subfloat[$R = 5$, Collinearity$ = 0.9$]{\label{fig:PR_50_5_9}
%% label for fourth subfigure
\includegraphics[width=0.4\linewidth]{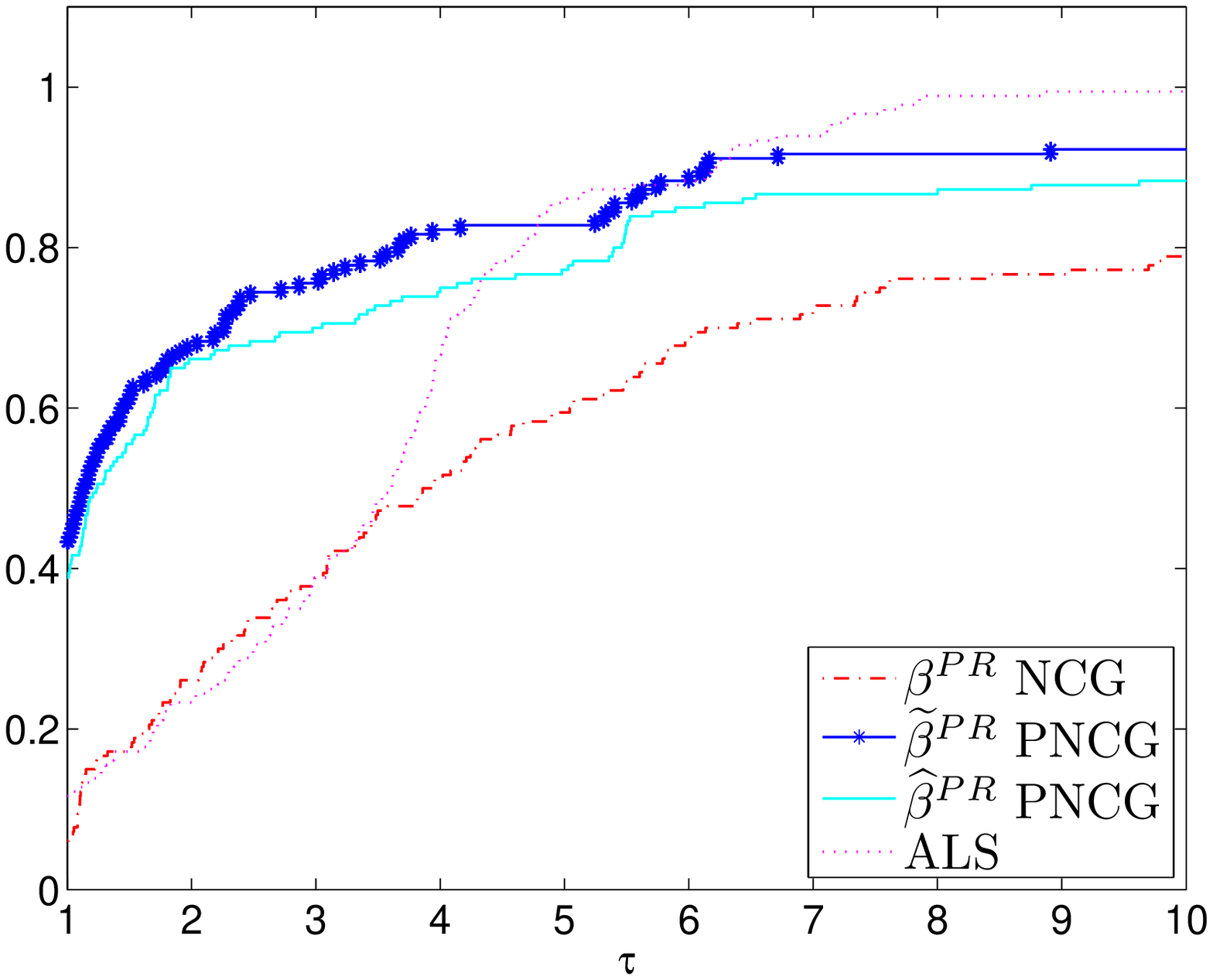}}
\caption{Performance profiles for the algorithms in $\mathscr{S}_2$ with $I = 50$.}
\label{fig:PR_50}
\end{figure}

\begin{figure}[p]
\captionsetup{justification=centering}
\centering
%%----start of first subfigure----
\subfloat[$R = 3$, Collinearity$ = 0.5$]{\label{fig:PR_100_3_5}
%% label for first subfigure
\includegraphics[width=0.4\linewidth]{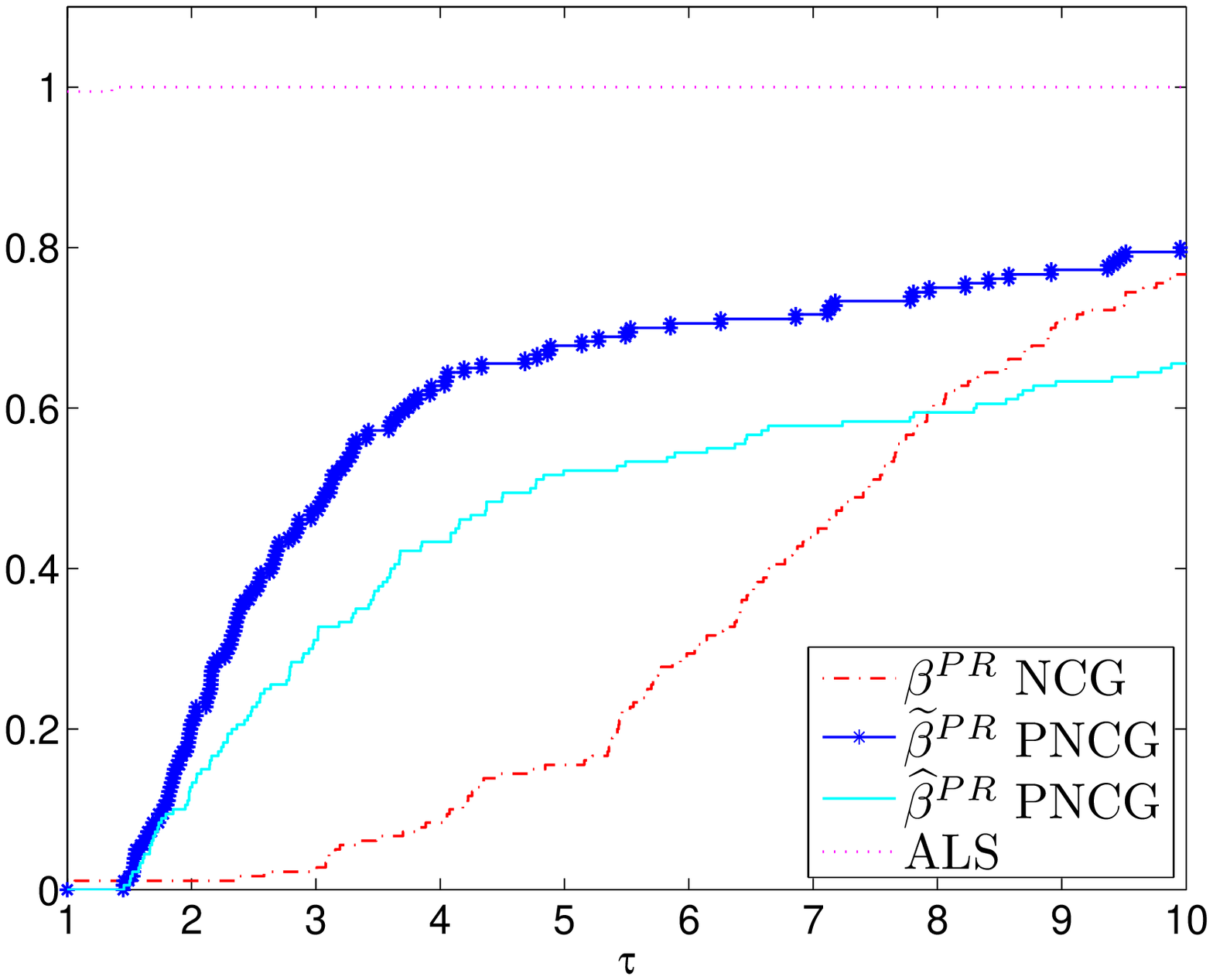}}
\hspace{0.1\linewidth}
%%----start of second subfigure----
\subfloat[$R = 3$, Collinearity$ = 0.9$]{\label{fig:PR_100_3_9}
%% label for second subfigure
\includegraphics[width=0.4\linewidth]{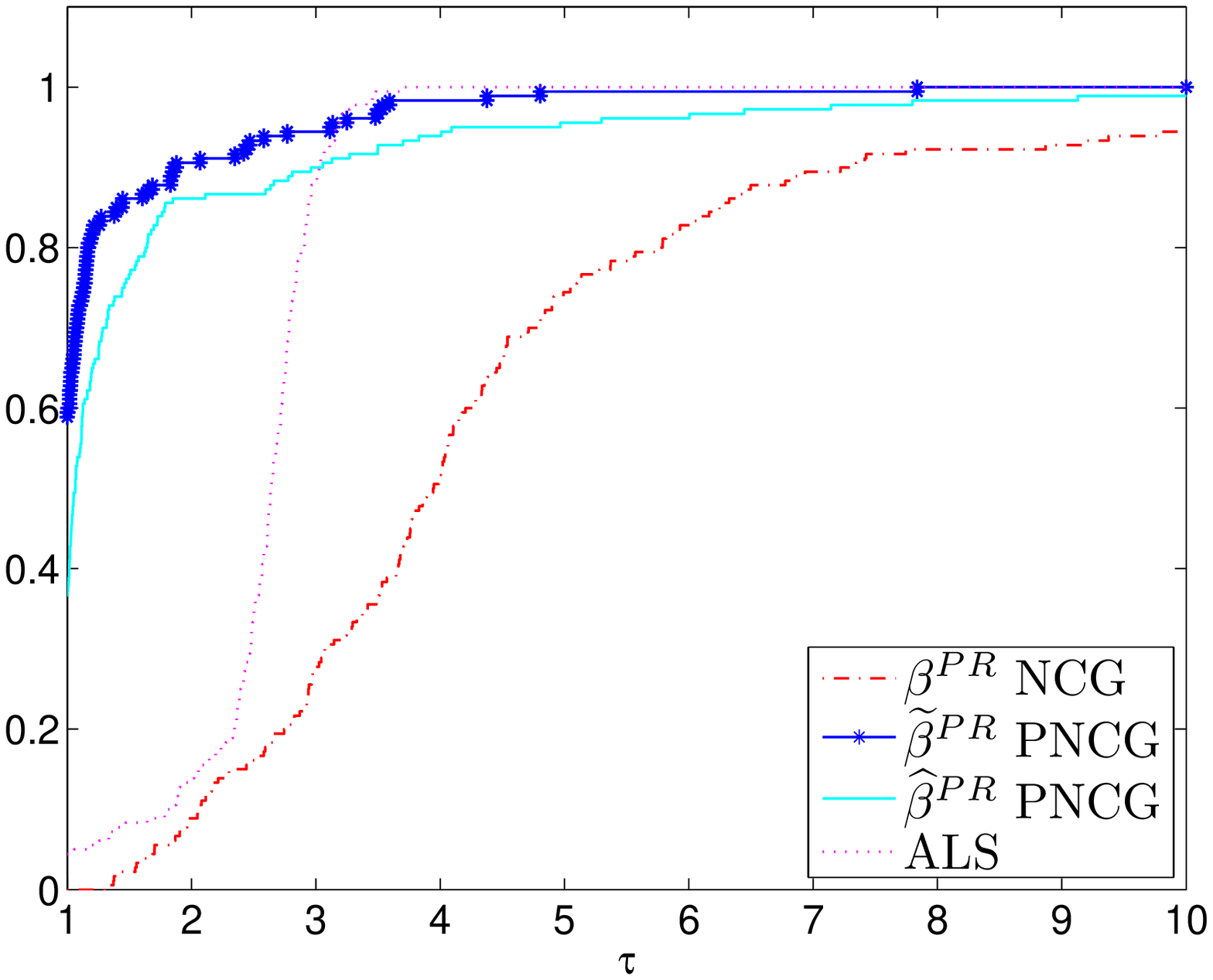}}
\\[20pt]
%%----start of third subfigure----
\subfloat[$R = 5$, Collinearity$ = 0.5$]{\label{fig:PR_100_5_5}
%% label for third subfigure
\includegraphics[width=0.4\linewidth]{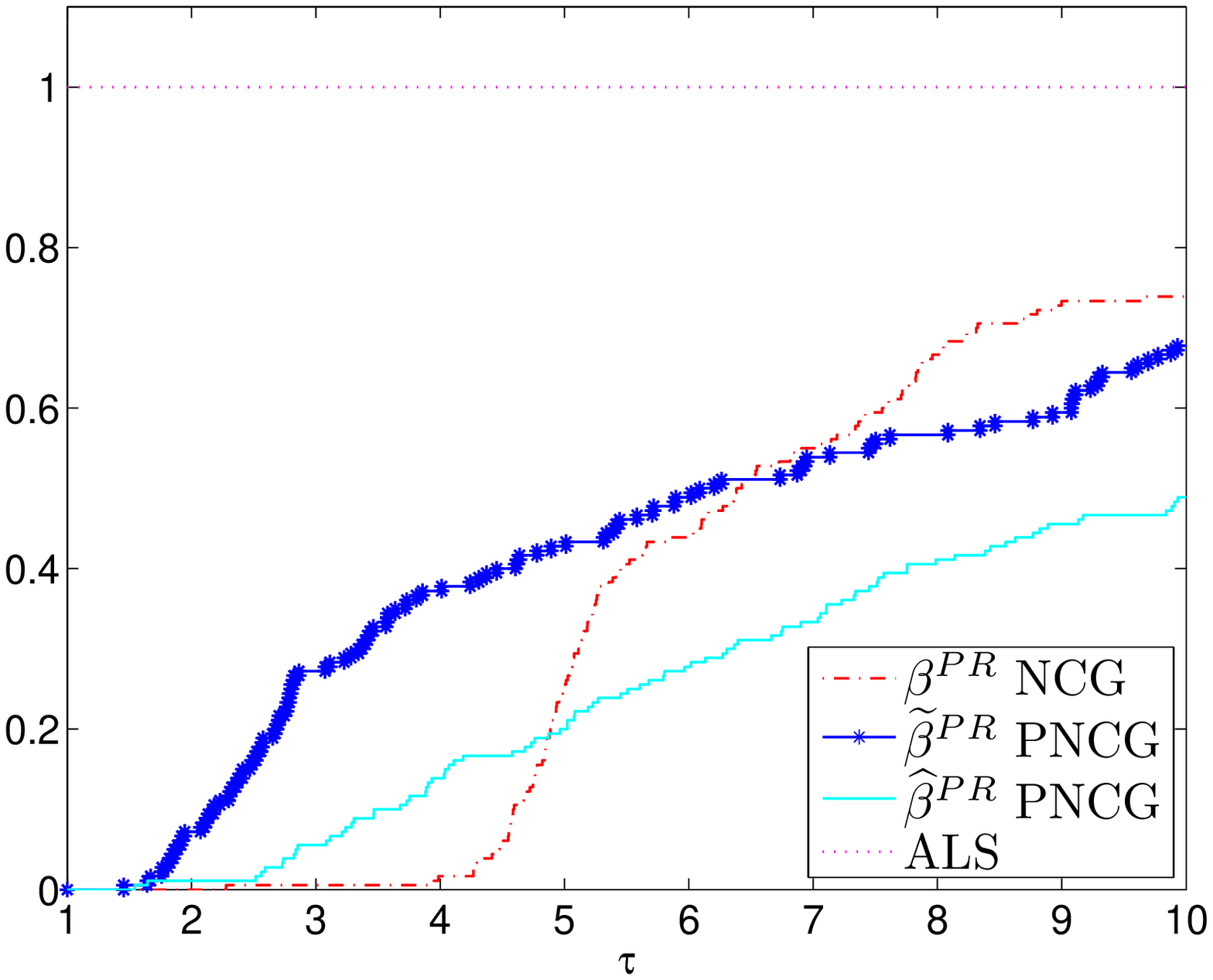}}
\hspace{0.1\linewidth}
%%----start of fourth subfigure----
\subfloat[$R = 5$, Collinearity$ = 0.9$]{\label{fig:PR_100_5_9}
%% label for fourth subfigure
\includegraphics[width=0.4\linewidth]{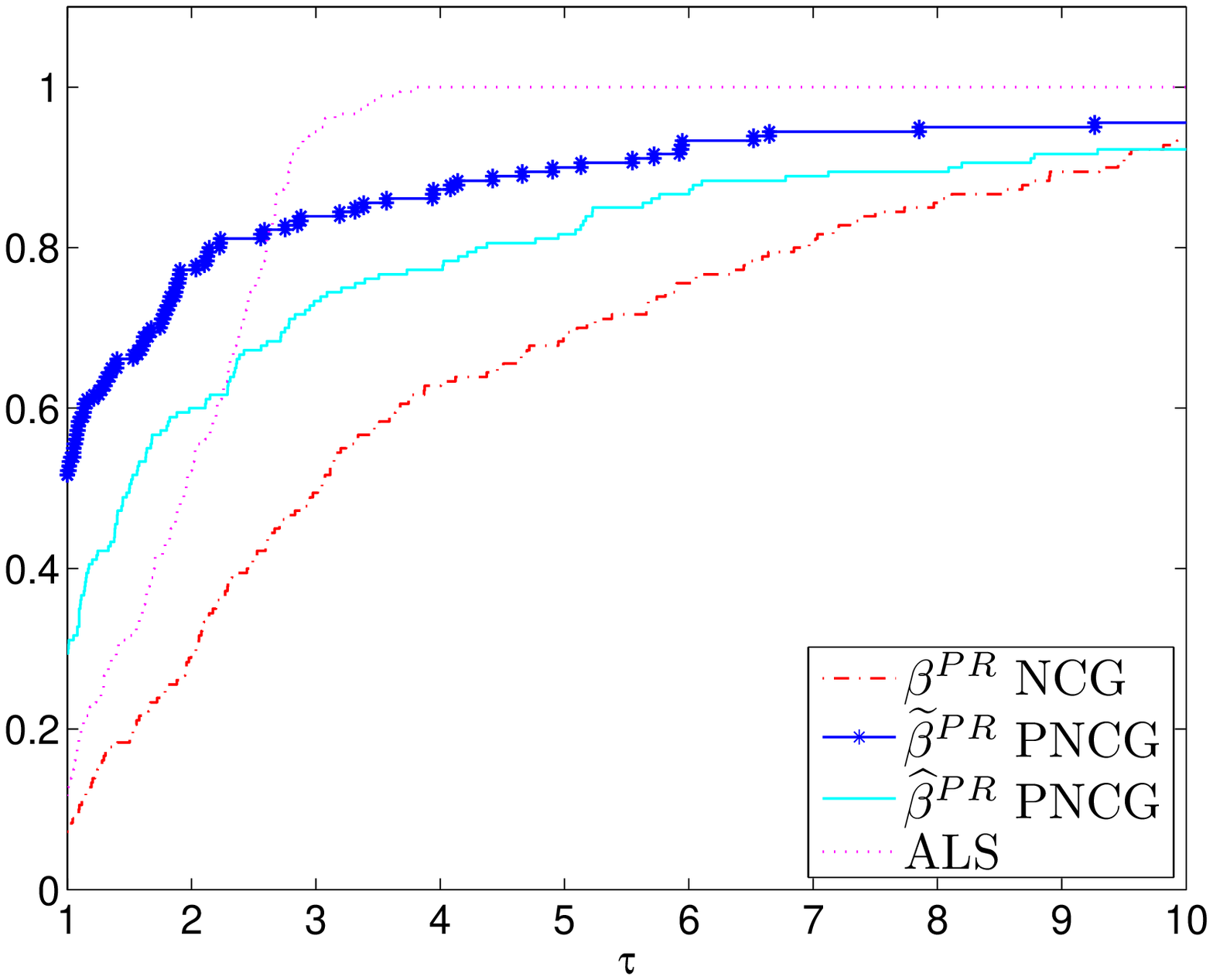}}
\caption{Performance profiles for the algorithms in $\mathscr{S}_2$ with $I = 100$.}
\label{fig:PR_100}
\end{figure}

\end{document}